\def\l@subsection{\@tocline{1}{0pt}{2pc}{1pc}{}}
\def\l@subsubsection{\@tocline{2}{0pt}{2pc}{1pc}{}}
\DeclareMathAlphabet{\pazocal}{OMS}{zplm}{m}{n}
\tikzset{>=stealth}
  \newcommand{\calN}{\mathcal{N}}
  \newcommand{\calU}{\mathcal{U}}
  \newcommand{\EE}{\mathbb{E}}
  \newcommand{\RR}{\mathbb{R}}
  \newcommand{\ZZ}{\mathbb{Z}}
  \newcommand{\bfa}{\textbf{a}}
  \newcommand{\bfb}{\textbf{b}}
  \newcommand{\gothic}{\mathfrak}
  \newcommand{\go}{{\gothic o}}
  \newtheorem{theorem}{Theorem}[section]
  \newtheorem{proposition}[theorem]{Proposition}
  \newtheorem{lemma}[theorem]{Lemma}
  \newtheorem{claim}[theorem]{Claim}
  \newtheorem*{claim*}{Claim}
  \newtheorem{introthm}{Theorem}
  \theoremstyle{definition}
  \newtheorem{definition}[theorem]{Definition}
  \newtheorem*{question*}{Question}
  \newtheorem*{answer*}{Answer}
  \newtheorem*{application*}{Application}
  \theoremstyle{remark}
  \newtheorem{remark}[theorem]{Remark}
  \newtheorem*{remark*}{Remark}
  \newcommand{\pka}{\partial_{\kappa}}
  \newcommand{\Teich}{{Teichm\"uller }} 
  \newcommand{\sC}{{\sf C}}   
  \newcommand{\sD}{{\sf D}}
  \newcommand{\sQ}{{\sf Q}}
  \renewcommand{\bfa}{{\sf a}}
  \newcommand{\mm}{{\sf m}}   
  \newcommand{\nn}{{\sf n}}
  \newcommand{\qq}{{\sf q}}   
  \newcommand{\rr}{{\sf r}}
\DeclareMathOperator{\diam}{diam}
  \newcommand{\param}{{\mathchoice{\mkern1mu\mbox{\raise2.2pt\hbox{$
  \centerdot$}}
  \mkern1mu}{\mkern1mu\mbox{\raise2.2pt\hbox{$\centerdot$}}\mkern1mu}{
  \mkern1.5mu\centerdot\mkern1.5mu}{\mkern1.5mu\centerdot\mkern1.5mu}}}
\DeclarePairedDelimiterX{\norm}[1]{\lvert}{\rvert}{#1}
\DeclarePairedDelimiterX{\Norm}[1]{\lVert}{\rVert}{#1}
  \newcommand{\ST}{\mathbin{\Big|}} 
  \newcommand{\from}{\colon\thinspace}
  \newcommand{\ga}{{\gamma}}
\newcommand{\p}{\partial}
\newcommand{\CAT}{\ensuremath{\operatorname{CAT}(0)}\xspace}         
\title[Sublinearly Morse geodesics and First Passage Percolation]{Sublinear Morse geodesics and First Passage Percolation}
\author{Sagnik Jana}
 \address{Department of Mathematics,  University of Tennessee at Knoxville, Knoxville, TN, USA}
\email{sjana1@vols.utk.edu}
  \author{Yulan Qing}
 \address{Department of Mathematics,  University of Tennessee at Knoxville, Knoxville, TN, USA}
 \email{yqing@utk.edu}
\begin{document}
\maketitle
\begin{abstract}
Given an infinite connected graph, a way to randomly perturb its metric is to assign random i.i.d. lengths to the edges of the graph. Assume that the graph is infinite and of bounded degree. Assume also strict positivity and finite expectation of the edge length distribution and existence of a sublinearly Morse bi-infinite geodesic line, we prove that  almost surely there exists a bi-infinite geodesic line. This generalizes a previous result of \cite{BT17} regarding Morse geodesics.  
\end{abstract}

\section{Introduction}
First passage percolation is a model of random perturbation of a given metric graph. In this paper, we shall restrict to the simplest model, where random i.i.d lengths are assigned to the edges of a fixed graph,  and analyse the existence of infinite geodesic lines. We refer to \cite{ADH15},\cite{GK12},\cite{Kesten} for background and references.

We recall here how first passage percolation is defined following \cite{BT17}. We consider a connected non-oriented graph $X$, whose set of vertices and edges are denoted by $V$ and $E$, respectively. For every function $\omega: E \rightarrow(0, \infty)$, we equip $V$ with the weighted graph metric $d_{\omega}$, where each edge $e$ has weight $\omega(e)$. In other words, for every $v_{1}, v_{2} \in V, d_{\omega}\left(v_{1}, v_{2}\right)$ is defined as the infimum over all path $\gamma=\left(e_{1}, \ldots, e_{m}\right)$ joining $v_{1}$ to $v_{2}$ of $|\gamma|_{\omega}:=\sum_{i=1}^{m} \omega\left(e_{i}\right)$. We note that the infimum is always realized as a consequence of the bounded degree assumption. The graph metric on $X$ corresponds to the case where $\omega =1$, in this case, we denote the metric as $d$. Our model consists in choosing independently at random the weights $\omega(e)$ according to the probability measure defined on the set of all weight functions $\omega$. We refer to section ~\ref{sec23} for formal details.  



Let $X$ be a simplicial graph. Recall that a path $\gamma=\left(e_{1}, \ldots, e_{n}\right)$ between two vertices $x, y$ is a sequence of consecutive edges joining $x$ to $y$. We write
\[x=\gamma(0), \ldots, \gamma(n)=y\]
to mean the set of vertices such that for all $0 \leq i<n, \gamma(i)$ and $\gamma(i+1)$ are joined by the edge $e_{i+1}$. For all $i<j$, we shall also denote by $\gamma([i, j])$ the sub path $\left(e_{i+1}, \ldots, e_{j}\right)$ joining $\gamma(i)$ to $\gamma(j)$. Let $X_\omega$ denotes the graph $X$ with edge lengths $\omega(e)$ for each edge $e$.\newline
A bi-infinite geodesic is a geodesic which is doubly infinite that is, its vertices are indexed by $\mathbb{Z}$. It was conjectured that under i.i.d lengths of edges, almost surely there are no bi-infinite geodesics in FPP on $\mathbb{Z}^2.$ See \cite{Kesten} for more details. For negatively curved spaces, the situation changes dramatically. In \cite{WW}, Wehr proved that almost surely there is no bi-infinite geodesic which lies entirely in the upper half Euclidean plane. 

On the other hand, \cite{BT17} established the existence of bi-infinite geodesic line in the affirmative if we assume the existence of a Morse quasi-geodesic line before the percolation. In this case the geometric property of a Morse quasi-geodesic line strongly informs how it behaves after the percolation. We are thus inspired by the proof and show in this paper that even weaker assumptions suffices. That is, we suppose instead there is a geodesic line whose Morse probably weakens as it extends to infinity. This class of quasi-geodesics were constructed recently by \cite{QRT1,QRT2} and named \emph{sublinearly Morse } quasi-geodesics. The construction roughly replaces bounded neighborhood with a sublinear neighborhood with respect to the distance to the origin. In this paper, we prove
\begin{introthm}\label{introthm1}
Let $X$ be an infinite connected graph with bounded degree. Assume $\EE \omega_e < \infty$ and $\nu(0) = 0$. If $X$ contains a sublinearly Morse bi-infinite quasi-geodesic line, then for almost every $\omega$, there is a bi-infinite geodesic ray in $X_\omega$.
\end{introthm}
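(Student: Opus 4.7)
The plan is to follow the strategy of \cite{BT17} and replace the Morse hypothesis with its sublinear analogue, thickening the reference bounded neighborhood into a sublinearly-growing tube. Let $\alpha \colon \ZZ \to X$ be the given sublinearly Morse $(q,Q)$-quasi-geodesic, let $\kappa$ denote the sublinear function controlling its Morse neighborhoods, and fix a basepoint $o = \alpha(0)$. For each $n \geq 1$, let $\gamma_n$ be a $d_\omega$-geodesic joining $\alpha(-n)$ to $\alpha(n)$. The goal is to produce a bi-infinite $d_\omega$-geodesic as a subsequential limit of the sequence $\{\gamma_n\}$.

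First I would establish an almost sure two-sided linear comparison between $d_\omega$ and $d$ on pairs of vertices at large $d$-distance. The upper bound $d_\omega(u,v) \leq C(\omega)\, d(u,v)$ follows from $\EE \, \omega_e < \infty$ via a Borel--Cantelli argument summed along a $d$-geodesic between $u$ and $v$, combined with bounded degree. The matching lower bound $d_\omega(u,v) \geq c(\omega)\, d(u,v)$ uses $\nu(\{0\}) = 0$: fix $\eta > 0$ with $\PP(\omega_e < \eta)$ small and apply a standard FPP chaining estimate (cf.\ \cite{BT17}) showing that long paths must, with high probability, contain a positive density of edges of weight at least $\eta$. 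Consequently $\alpha$ remains a quasi-geodesic in the metric $d_\omega$ with random but finite quasi-isometry constants, and any $d_\omega$-geodesic segment is a $d$-quasi-geodesic with constants of the same order.

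The heart of the argument is the sublinear stability step. Using the comparison above, the segments $\gamma_n$ are $(q'(\omega), Q'(\omega))$-quasi-geodesics in the unweighted metric $d$, with quasi-isometry constants \emph{independent of $n$}. Applying the sublinearly Morse property of $\alpha$ (in the original metric $d$) to each $\gamma_n$, one obtains a sublinear function $\kappa'$, depending on $\omega$ only through the random quasi-geodesic constants above, such that every point $p \in \gamma_n$ satisfies
\[
d(p, \alpha) \leq \kappa'\bigl(d(p, o)\bigr).
\]
I expect this to be the main obstacle: the sublinear Morse lemma a priori controls quasi-geodesics with fixed endpoints on $\alpha$, whereas $\gamma_n$ could in principle exit and re-enter the sublinear tube around $\alpha$ in complicated ways. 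Handling this requires applying the sublinear estimate to sub-arcs of $\gamma_n$ at all scales $R$ and checking that the constants remain uniform in $n$, as well as verifying that the sublinear gauge $\kappa$ of $\alpha$ passes to quasi-geodesics with the larger constants $(q'(\omega), Q'(\omega))$ — a point that needs care, since in the QRT framework the sublinear function may depend on the quasi-isometry constants.

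Once the sublinear tube estimate is in place, the conclusion follows by a standard compactness/diagonalization argument. For every $R > 0$, the ball $B_R(o) \subset X$ is finite by bounded degree, so the subset $\{x \in B_R(o) : d(x, \alpha) \leq \kappa'(R)\}$ through which every $\gamma_n$ must pass is finite. A diagonal extraction yields a subsequence $\gamma_{n_k}$ converging in the compact-open topology to a bi-infinite path $\gamma_\infty$ through $o$. Since every finite sub-segment of $\gamma_\infty$ is an eventual pointwise limit of $d_\omega$-geodesic sub-segments of $\gamma_{n_k}$ and $d_\omega$-geodesicity is preserved under such limits in a locally finite graph, $\gamma_\infty$ is a bi-infinite $d_\omega$-geodesic, completing the proof.
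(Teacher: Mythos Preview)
Your proposal has a genuine gap at the step you yourself flag as the main obstacle, but the problem is more fundamental than you indicate. The two-sided linear comparison $c(\omega)\,d \le d_\omega \le C(\omega)\,d$ that you need in order to conclude that $d_\omega$-geodesics are $d$-quasi-geodesics with constants independent of $n$ is \emph{not available}. With only $\EE\,\omega_e < \infty$ and no bounded-support hypothesis, the edge weights are almost surely unbounded, so there is no finite $C(\omega)$ with $d_\omega(u,v)\le C(\omega)\,d(u,v)$ for all pairs $u,v$; your Borel--Cantelli sketch would need to control every $d$-geodesic simultaneously, which fails without exponential moments. The estimates that actually hold are much weaker: an upper bound on $\omega$-length only along a \emph{fixed} self-avoiding path (Proposition~\ref{prop2.13}(1), applied to $\gamma_0$), and a lower bound $|\gamma|_\omega \ge c|\gamma| - r_1$ only for paths satisfying $d(\gamma,\fo)\le |\gamma|$ (Proposition~\ref{prop2.13}(2)). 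Neither yields a global bi-Lipschitz equivalence, so you cannot feed the $\gamma_n$ into the sublinear Morse property as quasi-geodesics.

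This is exactly why the paper does not argue via quasi-geodesics at all. The key step is to prove a \emph{middle recurrence} property for sublinearly Morse lines (Proposition~\ref{prop3.5}): any path $p$ with endpoints $a,b$ on $\gamma_0$ and length $\ell(p)\le C\,d(a,b)$ must meet a $c(C)\cdot\kappa'$-neighborhood of the middle third of $[a,b]_{\gamma_0}$. This applies to arbitrary bounded-slope paths, not just quasi-geodesics, and yields a superlinear divergence statement (Lemma~\ref{lemma36}): any path that stays outside the $R$-neighborhood of $\gamma_0$ has length at least $\phi(R)\,d(x,y)$ with $\phi(R)\to\infty$. The proof then runs by contradiction: if $\gamma_\omega^n$ wandered arbitrarily far from $\fo$, one extracts a sub-arc outside the $R_n$-neighborhood of $\gamma_0$, applies Lemma~\ref{lemma36} together with Proposition~\ref{prop2.13}(2) to get a large lower bound on its $\omega$-length, and compares with the upper bound coming from Proposition~\ref{prop2.13}(1) applied to the competing path along $\gamma_0$. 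The compactness endgame is then as you describe. The point is that the middle-recurrence route replaces ``$\gamma_n$ is a quasi-geodesic'' (which is false) by ``sub-arcs of $\gamma_n$ have controlled slope in $d$'' (which follows from the two one-sided estimates that actually hold).
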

The proof is analogous to that of \cite{BT17}. We first offer a new property of sublinearly Morse geodesic rays and lines, which we refer to as middle recurrence. The name comes from the analogous property used in \cite{BT17}. The property is first exhibited by \cite{DMS} with an improved proof by \cite{ADT}. Using the middle recurrence, we derive a pointed superlinear divergence property when then is used in the main proof. In an upcoming paper, we aim to study whether sublinearly Morse directions are preserved under first passage percolation.
\subsection*{History}
For the Cayley graph of $\ZZ^2$ with respect to standard generators, first passage percolation was introduced by Hammersley and Welsh \cite{HW65}.  Benjamini, Tessera, and Zeitouni \cite{BZ12, BT17} studied the question in Gromov hyperbolic spaces and a number of questions have been raised there. In particular, \cite{BZ12} established the tightness of fluctuations of the passage time from the center to the boundary of a large ball, and \cite{BT17} established the almost sure existence of bi-geodesics in hyperbolic spaces. Recently Basu-Mj \cite{BM22} studied the first passage percolation on Gromov hyperbolic group $G$ with boundary equipped with the Patterson-Sullivan measure $\nu$. They showed that the variance of the first passage time grows linearly with word distance along word geodesic rays in every fixed boundary direction.

The sublinearly Morse boundary is a  geometric boundary construction that generalize the Gromov boundary. It is shown to be always quasi-isometrically invariant and  metrizable, and is frequently a group-invariant topological model for suitable random walks on the associated group. In comparison, Morse boundary, constructed in \cite{Morse, contracting}, in comparison, often lacks metrizability and Poisson boundary realization property.   Meanwhile, the genericity of a more geometric flavor is also exhibited for sublinearly Morse boundaries. In \cite{GQR22}, genericity of sublinearly Morse directions under Patterson Sullivan measure was shown to hold in the more general context of actions which admit a strongly contracting element. In fact, the results in \cite{GQR22} concerning stationary measures were recently claimed in a different setting by Inhyeok Choi \cite{Choi}, who, in place of ergodic theoretic and boundary techniques uses a pivoting technique developed by Gou\"ezel\cite{Gou22}. Also, following  \cite{wenyuan}, genericity of sublinearly Morse directions on the horofunction boundary was shown for all proper statistically convex-cocompact actions on proper metric spaces \cite{QY24}. Most recently, Wen defined and studied the genericity sublinearly Morse directions in higher rank symmetric spaces \cite{rouwen25}.

\subsection*{Open questions}
An immediate question to ask is if the sublinearly Morse property can be detected on the bi-infinite geodesic line exhibited in this paper. The authors of this paper thus continue to study and aim to give a positive answer in an upcoming preprint \cite{JQ25b}. A harder and open question is to ask if first passage percolations studied in this paper preserve the Morse property of the bi-infinite geodesic line. 
\subsection*{Acknowledgement} The second-named author would like to thank Gabriel Pallier for suggesting the problem and for helpful discussions.

\section{preliminaries}

\subsection{Quasi-isometry and quasi-isometric embeddings}

\begin{definition}[Quasi Isometric embedding] \label{Def:Quasi-Isometry} 
Let $(X , d_X)$ and $(Y , d_Y)$ be metric spaces. For constants $q \geq 1$ and
$Q \geq 0$, we say a map $f \from X \to Y$ is a 
$(q, Q)$--\textit{quasi-isometric embedding} if, for all points $x_1, x_2 \in X$
$$
\frac{1}{q} d_X (x_1, x_2) - Q  \leq d_Y \big(f (x_1), f (x_2)\big) 
   \leq q \, d_X (x_1, x_2) + Q.
$$
If, in addition, every point in $Y$ lies in the $Q$--neighbourhood of the image of 
$f$, then $f$ is called a $(q, Q)$--quasi-isometry. When such a map exists, $X$ 
and $Y$ are said to be \textit{quasi-isometric}. 

A quasi-isometric embedding $f^{-1} \from Y \to X$ is called a \emph{quasi-inverse} of 
$f$ if for every $x \in X$, $d_X(x, f^{-1}f(x))$ is uniformly bounded above. 
In fact, after replacing $q$ and $Q$ with larger constants, we assume that 
$f^{-1}$ is also a $(q, Q)$--quasi-isometric embedding, 
\[
\forall x \in X \quad d_X\big(x, f^{-1}f(x)\big) \leq Q \qquad\text{and}\qquad
\forall y \in Y \quad d_Y\big(y, f\,f^{-1}(x)\big) \leq Q.
\]
\end{definition}

A \emph{geodesic ray} in $X$ is an isometric embedding $\beta \from [0, \infty) \to X$. We fix a base-point $\go \in X$ and always assume that $\beta(0) = \go$, that is, a geodesic ray is always assumed to start from this fixed base-point. 
\begin{definition}[Quasi-geodesics] \label{Def:Quadi-Geodesic} 
In this paper, a \emph{quasi-geodesic ray} is a continuous quasi-isometric 
embedding $\beta \from [0, \infty) \to X$  starting from the basepoint $\go$. 
\end{definition}
The additional assumption that quasi-geodesics are continuous is not necessary for the results in this paper to hold, but it is added for convenience and to make the exposition simpler. 

If $\beta \from [0,\infty) \to X$ is a $(\qq, \sQ)$--quasi-isometric embedding, and $f \from X \to Y$ is a $(q, Q)$--quasi-isometry then the composition  $f \circ \beta \from [t_{1}, t_{2}] \to Y$ is a quasi-isometric embedding, but it may not be continuous. However, one can adjust the map slightly to make it continuous (see Definition 2.2 \cite{QRT1}) such that $f \circ \beta$ is a $(q\qq, 2(q\qq + q \sQ + Q))$--quasi-geodesic ray.

Similar to above, a \emph{geodesic segment} is an isometric embedding 
$\beta \from [t_{1}, t_{2}] \to X$ and a \emph{quasi-geodesic segment} is a continuous 
quasi-isometric embedding \[\beta \from [t_{1}, t_{2}] \to X.\] 

\noindent \textbf{Notation}. In this paper we will use $\alpha, \beta,$ etc. to denote quasi-geodesic rays. If the quasi-geodesic constants are $(1, 0)$, we use $\alpha_{0}, \beta_{0},$etc. to signify that they are in fact geodesic rays.  Meanwhile, we use $[\alpha], [\beta],...$ to denote the $\kappa$-equivalence classes of quasi-geodesic rays (see Definition~\ref{Def:Fellow-Travel}), and we also use $\bfa, \bfb,$ etc. to denote $\kappa$-equivalence classes without referring an element in each class. In contrast, we use $\alpha(\infty)$ to denote equivalence classes of $\alpha$ in the visual boundary (see Chapter II.8 in \cite{BH1}).

Furthermore, let $\alpha$ be a (quasi-)geodesic ray $\alpha \from [0, \infty) \to X$, if $x_{1}, x_{2}$ are points on $\alpha$, then the segment of $\alpha$ between $x_{1}$ and $x_{2}$
is denoted $[x_{1}, x_{2}]_{\alpha}$. If a segment is presented without subscript, for example $[y_{1}, y_{2}]$, then it is a geodesic segment between the two points.
Let $\beta$ be a quasi-geodesic ray. Define 
\[
\Norm{x} : = d(\go, x).
\]
For $\rr>0$, let $t_\rr$ be the first time where $\Norm{\beta(t)}=\rr$ and define:
\begin{equation}\label{notation}
\beta_\rr := \beta(t_\rr)
\qquad\text{and}\qquad
\beta|_{\rr} : = \beta{[0,t_\rr]} = [\beta(0),  \beta_{\rr}]_{\beta}
\end{equation}
which are points and segments in $X$, respectively. 

\subsection{Sublinearly Morse geodesic lines} \hfill

Let $\kappa \from [0, \infty) \to [1, \infty)$ be a sublinear function that is monotone increasing and concave. That is
\[
\lim_{t \to \infty} \frac{\kappa(t)}{t} = 0. \label{subfunction}
\]

The assumption that $\kappa$ is increasing and concave makes certain arguments
cleaner, otherwise they are not really needed. One can always replace any 
sub-linear function $\kappa$, with another sub-linear function $\overline \kappa$
so that \[\kappa(t) \leq \overline \kappa(t) \leq \sC \, \kappa(t)\] for some constant $\sC$ 
and $\overline \kappa$ is monotone increasing and concave. For example, define 
\[
\overline \kappa(t) = \sup \Big\{ \lambda \kappa(u) + (1-\lambda) \kappa(v) \ST 
\ 0 \leq \lambda \leq 1, \ u, v>0, \ \text{and}\ \lambda u + (1-\lambda)v =t \Big\}.
\]
The requirement $\kappa(t) \geq 1$ is there to remove additive errors in the definition
of $\kappa$--contracting geodesics(See Definition~\ref{Def:generalContracting}).

\begin{definition}[$\kappa$--neighborhood]  \label{Def:Neighborhood} 
For a closed set $Z$ and a constant $\nn$ define the $(\kappa, \nn)$--neighbourhood of $Z$ to be 
\[
\calN_\kappa(Z, \nn) = \Big\{ x \in X \ST 
  d_X(x, Z) \leq  \nn \cdot \kappa(x)  \Big\}.
\]

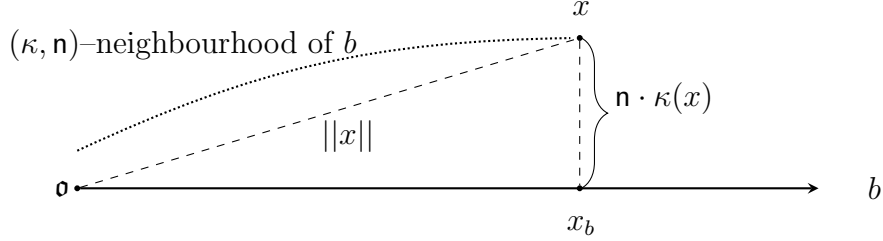
\begin{figure}[h]
\begin{tikzpicture}
 \tikzstyle{vertex} =[circle,draw,fill=black,thick, inner sep=0pt,minimum size=.5 mm] 
[thick, 
    scale=1,
    vertex/.style={circle,draw,fill=black,thick,
                   inner sep=0pt,minimum size= .5 mm},
                  
      trans/.style={thick,->, shorten >=6pt,shorten <=6pt,>=stealth},
   ]

 \node[vertex] (a) at (0,0) {};
 \node at (-0.2,0) {$\go$};
 \node (b) at (10, 0) {};
 \node at (10.6, 0) {$b$};
 \node (c) at (6.7, 2) {};
 \node[vertex] (d) at (6.68,2) {};
 \node at (6.7, 2.4){$x$};
 \node[vertex] (e) at (6.68,0) {};
 \node at (6.7, -0.5){$x_{b}$};
 \draw [-,dashed](d)--(e);
 \draw [-,dashed](a)--(d);
 \draw [decorate,decoration={brace,amplitude=10pt},xshift=0pt,yshift=0pt]
  (6.7,2) -- (6.7,0)  node [black,midway,xshift=0pt,yshift=0pt] {};

 \node at (7.8, 1.2){$\nn \cdot \kappa(x)$};
 \node at (3.6, 0.7){$||x||$};
 \draw [thick, ->](a)--(b);
 \path[thick, densely dotted](0,0.5) edge [bend left=12] (c);
\node at (1.4, 1.9){$(\kappa, \nn)$--neighbourhood of $b$};
\end{tikzpicture}
\caption{A $\kappa$-neighbourhood of a geodesic ray $b$ with multiplicative constant $\nn$.}
\end{figure}
\end{definition}

\begin{definition} [$\kappa$-Morse I, $\kappa$-Morse II] \label{D:k-morse} \label{Def:Morse} 
Let $Z \subseteq X$ be a closed set, and let $\kappa$ be a concave sublinear function. 
We say that $Z$ is \emph{$\kappa$-Morse} if one of the following equivalent (see Proposition 3.10 \cite{QRT2})  condition holds:
\begin{enumerate}

\item[I.] There exists a proper function 
$\mm_Z : \mathbb{R}^2 \to \mathbb{R}$ such that for any sublinear function $\kappa'$ 
and for any $r > 0$, there exists $R$ such that for any $(q, Q)$-quasi-geodesic ray $\beta$
with $\mm_Z(\qq, \sQ)$ small compared to $r$, if 
$$d_X(\beta_R, Z) \leq \kappa'(R)
\qquad\text{then}\qquad
\beta|_r \subset \calN_\kappa \big(Z, \mm_Z(q, Q)\big)$$

\item[II.] There is a function
\[
\mm'_Z \from \RR_+^2 \to \RR_+
\]
so that if $\beta \from [s,t] \to X$ is a $(\qq, \sQ)$--quasi-geodesic with end points 
on $Z$ then
\[
[s,t]_{\beta}  \subset \calN_{\kappa} \big(Z,  \mm'_Z(q, Q)\big). 
\]
\end{enumerate}
\end{definition} 

\begin{remark}
By taking the maximum function of $\mm_Z, \mm'_Z,$ we may and will always assume that both conditions hold for the same $\mm_Z$, which we refer to as the $\kappa$-Morse gauge. Further,
\begin{equation} 
\mm_Z(q, Q) \geq \max(q, Q). 
\end{equation} 
\end{remark}

\begin{definition} \label{weakprojection}
Let $(X, d_X)$ be a proper geodesic metric space and $Z \subseteq X$ a closed subset, and let 
$\kappa$ be a concave sublinear function. A map $\pi_{Z} \from X \to \mathcal{P}(Z)$ is a $\kappa$-\emph{projection}  if there exist constants $D_{1}, D_{2}$, depending only on $Z$ and $\kappa$, such that for any points $x \in X$ and $z \in Z$, 
\[
\diam_X(\{z \} \cup \pi_{Z}(x)) \leq D_{1} \cdot d_X(x, z) + D_{2} \cdot \kappa(||x||).
\]
\end{definition}

A $\kappa$-projection differs from a nearest point projection by a uniform multiplicative error and a sublinear additive error. In particular, the nearest point projection is a $\kappa$-projection. Indeed, for $z \in Z$ and $w \in \pi_Z(x)$, we have
\[
d(z, w) \leq d(z, x) + d(x, w) \leq 2d(z, x).
\]
\begin{definition}[$\kappa$-weakly contracting] \label{Def:generalContracting}
For a closed subspace $Z$ of a metric space $(X, d_X)$ and a $\kappa$-projection $\pi_Z$ onto $Z$, 
we say $Z$ is \emph{$\kappa$-weakly contracting} with respect to $\pi_Z$ if there are constants $C_{1}, C_{2}$, depending only on $Z$ and $\kappa$, such that, for every 
$x,y \in X$
\[
d_X(x, y) \leq C_{1} \cdot d_X( x, Z) \quad \Longrightarrow \quad
\diam_X \big( \pi_Z(x) \cup  \pi_Z(y)  \big) \leq c_{2} \cdot \kappa(||x||).
\]
\end{definition} 
In the special case that $\pi_Z$ is the nearest point projection and $C_1 = 1$, this property was called $\kappa$\emph{-contracting} in \cite{QRT1}. It was shown in \cite{QRT1} that, in the setting of CAT(0) spaces, this is stronger than the $\kappa$-Morse condition.
\begin{proposition}\cite[Proposition A.6.]{QRT2}\label{contracting}
Let $(X, \go)$ be a proper geodesic metric space with a fixed base point and let $\alpha$ be a quasi-geodesic
ray in $X$. Let $\pi$ be any $\kappa$-projection from $X$ to $\alpha$ in the sense of Definition~\ref{weakprojection}.
Then if $\alpha$ is $\kappa$-weakly Morse, then it is $\kappa'$-weakly contracting with respect to $\pi$ for some sublinear function $\kappa'$.
\end{proposition}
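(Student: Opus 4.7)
The plan is to prove the proposition by contradiction via condition II of the $\kappa$-Morse property. Let $Z$ denote the image of $\alpha$ and fix a constant $C_1 > 0$ small (to be specified in terms of $D_1$). Suppose $x, y \in X$ satisfy $d_X(x, y) \leq C_1 \cdot d_X(x, Z)$; set $R := d_X(x, Z)$, pick $z_x \in \pi(x)$ and $z_y \in \pi(y)$, and write $D := d_X(z_x, z_y)$. The target is to exhibit a sublinear $\kappa'$ and a constant $C_2$ so that $D \leq C_2 \cdot \kappa'(\|x\|)$.

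First I would establish elementary projection bounds. Since $\pi$ is a $\kappa$-projection, comparing $\pi(x)$ against a nearest point $z_x^* \in Z$ to $x$ gives $d_X(x, z_x) \leq (1 + D_1) R + D_2 \kappa(\|x\|)$, and an analogous inequality for $y$ follows from $d_X(y, Z) \leq (1 + C_1) R$, together with concavity/subadditivity of $\kappa$ to absorb $\kappa(\|y\|)$ into a constant multiple of $\kappa(\|x\|)$. Next I would form the concatenated path $P = [z_x, x] \cup [x, y] \cup [y, z_y]$; its total length satisfies $|P| \leq A R + B \kappa(\|x\|)$ for explicit $A = A(C_1, D_1)$ and $B = B(D_2)$, and its endpoints lie on $Z$.

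The crux of the argument is to show that, for $C_1$ sufficiently small, $P$ is globally a $(q, Q)$-quasi-geodesic with $q$ a universal constant and $Q = O(\kappa(\|x\|))$. The key inequality is that for arc-length parameters $s < t$ on $P$, the triangle inequality yields $d_X(P(s), P(t)) \geq D + (t - s) - |P|$, so it is enough to bound the excess length $|P| - D$ by $O(\kappa(\|x\|))$. Intuitively, the middle segment $[x,y]$ of length at most $C_1 R$ is much shorter than the outer segments of length at least $R$, so backtracking at the junctions $x$ and $y$ is restricted; choosing $C_1$ small forces the excess down to the sublinear scale. Once $P$ is a controlled quasi-geodesic with endpoints on $Z$, condition II of Definition~\ref{Def:Morse} gives $P \subset \calN_\kappa(Z, \mm_Z(q, Q))$. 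Applying this to the interior point $x \in P$ yields $R \leq \mm_Z(q, Q) \cdot \kappa(\|x\|)$. Plugging this back into the length bound gives $D \leq |P| \leq (A \cdot \mm_Z(q, Q) + B) \cdot \kappa(\|x\|)$, so setting $\kappa'(t) := (A \cdot \mm_Z(q, Q) + B)\kappa(t)$, which is still concave and sublinear, together with $C_2 := 1$, closes the argument — with the constant $C_1$ as fixed in the quasi-geodesic step.

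The main obstacle is the quasi-geodesicity of $P$: outside of hyperbolic or \CAT geometry, a concatenation of three geodesics is not automatically a quasi-geodesic, and the Gromov defect $|P| - D$ cannot be read off from comparison triangles. The delicate step is a bootstrap: shrinking $C_1$ while simultaneously controlling $|P| - D$ requires invoking the sublinearly Morse hypothesis on smaller subpaths (for instance on $[z_x, x] \cup [x, y]$ compared against a geodesic of $X$ sharing its endpoints, and symmetrically at $y$) to iteratively rule out macroscopic backtracking. This is precisely the place where the sublinearity of $\kappa$ enters essentially rather than decoratively, and where I expect the proof in \cite{QRT2} to invest most of its technical care.
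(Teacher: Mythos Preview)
The paper does not prove this proposition; it is quoted verbatim from \cite{QRT2} and used as a black box, so there is no proof in the present paper to compare against.

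On its own merits, your outline has a genuine gap at the quasi-geodesicity step. You aim to show that the tent path $P=[z_x,x]\cup[x,y]\cup[y,z_y]$ is a $(q,Q)$--quasi-geodesic with $q$ universal and $Q=O(\kappa(\|x\|))$, via the excess bound $|P|-D\le O(\kappa(\|x\|))$. But that excess bound is equivalent to $D\ge |P|-O(\kappa(\|x\|))\approx R$, which is the \emph{opposite} inequality to the one you want. More fatally, even granting such a $Q$, applying Definition~\ref{Def:Morse}(II) yields $R\le \mm_Z\bigl(q,\,O(\kappa(\|x\|))\bigr)\cdot\kappa(\|x\|)$, and since $\mm_Z$ is only assumed proper this need not be sublinear in $\|x\|$ (take $\mm_Z(q,Q)=e^{Q}$ and $\kappa=\log$). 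Your final line then sets $\kappa'(t)=(A\,\mm_Z(q,Q)+B)\kappa(t)$ as if $\mm_Z(q,Q)$ were a constant, but with $Q$ depending on $t$ it is not. The suggested bootstrap on the two-piece subpaths $[z_x,x]\cup[x,y]$ does not rescue this: near the junction $x$ those concatenations fail to be uniform quasi-geodesics for exactly the same reason (take $a$ on $[z_x,x]$ and $b$ on $[x,y]$ with $d(a,x)=d(x,b)$).

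The repair is to make the contradiction you announce at the outset do real work. If no sublinear $\kappa'$ suffices, extract a sequence $x_n,y_n$ with $D_n\ge\varepsilon\,\|x_n\|$ for a fixed $\varepsilon>0$. The crude bound $D_n\le \mathrm{const}\cdot R_n+O(\kappa(\|x_n\|))$ then forces $R_n$, $D_n$ and $\|x_n\|$ to be pairwise comparable; in particular $D_n\ge c\,R_n$ for a fixed $c>0$. \emph{Only under this assumption} does the tent path $P_n$ become a $(q,Q)$--quasi-geodesic with $q,Q$ independent of $n$: once the endpoint gap $D_n$ is a definite fraction of the total length $|P_n|$, your inequality $d(P(s),P(t))\ge (t-s)-(|P_n|-D_n)$ gives a uniform additive error. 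Now Definition~\ref{Def:Morse}(II) yields $R_n\le \mm_Z(q,Q)\,\kappa(\|x_n\|)$ with a fixed gauge, contradicting $R_n\asymp\|x_n\|$. This is the shape of the argument in \cite{QRT2}; the sublinearity of $\kappa$ is used once, at the final contradiction, not in any iterated bootstrap.
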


\begin{definition}[$\kappa$--equivalence classes in $\pka X$] \label{Def:Fellow-Travel}
Let $\beta$ and $\gamma$ be two quasi-geodesic rays in $X$. If $\beta$ is in some 
$\kappa$--neighborhood of $\gamma$ and $\gamma$ is in some 
$\kappa$--neighborhood of $\beta$, we say that $\beta$ and $\gamma$ 
\emph{$\kappa$--fellow travel} each other. This defines an equivalence
relation on the set of quasi-geodesic rays in $X$ (to obtain transitivity, one needs to change $\nn$ of the associated $(\kappa, \nn)$--neighborhood). 
\end{definition}

We denote the equivalence class 
that contains $\beta$ by $[\beta]$:
\begin{definition}[Sublinearly Morse boundary]
Let $\kappa$ be a sublinear function as specified in Section~\ref{subfunction} and let $X$ be a \CAT space.
\[\pka X : = \{ \text{ all } \kappa\text{-Morse quasi-geodesics } \} / \kappa\text{-fellow traveling}\]
\end{definition}

\begin{theorem} [\cite{QRT2}]
Let $X, Y$ be a proper metric space and let $\kappa$ be a sublinear function. The $\kappa$--boundaries of $X, Y$ are denoted $\pka X, \pka Y$. Any quasi-isometry from $X$ to $Y$ induces a homeomorphism between $\pka X$ and $\pka Y$.
\end{theorem}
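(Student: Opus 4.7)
The plan is to define the induced map $f_* \from \pka X \to \pka Y$ by sending the class $[\alpha]$ of a $\kappa$-Morse quasi-geodesic ray $\alpha$ in $X$ to the class $[f \circ \alpha]$ (using the standard continuous adjustment mentioned right after Definition~\ref{Def:Quadi-Geodesic} so that $f \circ \alpha$ is a continuous $(q\qq, 2(q\qq + q\sQ + Q))$-quasi-geodesic ray). I would fix a basepoint $\go_X \in X$ and take $\go_Y \in Y$ to be a choice of $f(\go_X)$, so that for every $x \in X$ one has $\|f(x)\|_Y$ and $\|x\|_X$ agreeing up to multiplicative constant $q$ and additive constant $Q$. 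Because $\kappa$ is concave, monotone, sublinear, and bounded below by $1$, this implies $\kappa(\|x\|_X)$ and $\kappa(\|f(x)\|_Y)$ are comparable up to a uniform multiplicative constant; this comparison will be used to transport $(\kappa, \nn)$-neighborhoods across $f$ and is the one ingredient that makes the whole argument go through.

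The main technical step is to show that $f \circ \alpha$ is $\kappa$-Morse whenever $\alpha$ is. I would verify condition II of Definition~\ref{D:k-morse}: let $\beta$ be a $(\qq', \sQ')$-quasi-geodesic in $Y$ with both endpoints on $f \circ \alpha$. Composing with the quasi-inverse $f^{-1}$ gives a quasi-geodesic in $X$ with controlled constants and endpoints within $Q$ of $\alpha$; extending by short geodesic segments produces a quasi-geodesic in $X$ with endpoints \emph{on} $\alpha$, and the $\kappa$-Morse gauge $\mm_\alpha$ then traps the whole thing in some $(\kappa, \mm_\alpha(\qq'', \sQ''))$-neighborhood of $\alpha$. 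Pushing forward by $f$ and applying the basepoint-comparison above converts this into a $(\kappa, \nn)$-neighborhood of $f\circ\alpha$ in $Y$, with $\nn$ depending only on $(\qq', \sQ')$ and on the quasi-isometry constants of $f$. The same template, but with the $r, R$ data of Definition~\ref{D:k-morse}(I) transported through $f^{-1}$ and then $f$, gives condition I.

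Well-definedness is now a direct consequence: if $\alpha, \alpha'$ are $\kappa$-fellow travelers in $X$ with $\alpha \subseteq \calN_\kappa(\alpha', \nn)$, then for every $t$ one has $d_Y(f(\alpha(t)), f(\alpha')) \leq q\nn \, \kappa(\|\alpha(t)\|_X) + Q$, and using the $\kappa$-comparison together with $\kappa \geq 1$ to absorb the additive $Q$, one gets $f \circ \alpha \subseteq \calN_\kappa(f \circ \alpha', \nn')$. The analogous construction with $f^{-1}$ produces $(f^{-1})_* \from \pka Y \to \pka X$, and since $f \circ f^{-1}$ and $f^{-1} \circ f$ differ from the identity by a uniformly bounded amount the two compositions fix every class, so $f_*$ is a bijection. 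For continuity I would use the basic neighborhoods $\calU_\kappa(\bfa, \rr)$ (the coarse visual neighborhoods the paper assigns to $\pka X$): given a target radius $r$ in $Y$, choose $R$ in $X$ large enough that $f$ sends everything at $X$-distance $R$ past $Y$-distance $r$, and re-run the Morse-preservation argument of step two to convert tracking in $X$ out to radius $R$ into tracking in $Y$ out to radius $r$. This gives $f_*(\calU_\kappa([\alpha], R)) \subseteq \calU_\kappa(f_*[\alpha], r)$, and the symmetric statement for $(f^{-1})_*$ finishes the homeomorphism.

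The hard part is the Morse-preservation step, specifically the conversion from a $\kappa$-neighborhood of $\alpha$ in $X$ to a $\kappa$-neighborhood of $f \circ \alpha$ in $Y$. One must check that the multiplicative and additive errors introduced by $f$ do not destroy the sublinear nature of the bound, and that the gauge $\mm_{f \circ \alpha}$ produced at the end depends only on the new quasi-geodesic data $(\qq', \sQ')$, not on the quasi-geodesic being tested; concavity and monotonicity of $\kappa$, together with $\kappa \geq 1$, are what make this bookkeeping close up cleanly.
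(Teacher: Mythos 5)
This theorem is quoted from \cite{QRT2} and the paper gives no proof of its own, so there is nothing internal to compare against; your proposal should be measured against the argument in \cite{QRT2}, and it follows essentially that same strategy (push forward rays by $f$, show $\kappa$-Morseness is preserved using the quasi-inverse and the comparability of $\kappa(\Norm{x})$ with $\kappa(\Norm{f(x)})$, then check continuity against the neighborhood basis $\calU_\kappa(\bfa,\rr)$). The one place your sketch is thin is continuity: a point of $\calU_\kappa(f_*\bfa, r)$ is tested against \emph{all} quasi-geodesics in a class in $Y$, not just those of the form $f\circ\beta$, so the pull-back through $f^{-1}$ has to be invoked there as well, with the quantifier ``$\mm$ small compared to $r$'' re-verified after the constants change --- but this is exactly the bookkeeping carried out in \cite{QRT2}, not a different route.
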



\subsection{First passage percolation} \label{sec23}
 $(X,d)$ be a connected graph with vertex set $V$ and edge set $E$ with the graph distance metric $d$. For a fix vertex $\mathfrak{0}, d(\gamma, \mathfrak{o})$ denote the set distance between $\mathfrak{o}$ and the set of vertices $\{\gamma(0), \gamma(1), \ldots\}$. \newline 
 Fix a probability measure $\nu$ supported on  $[0, \infty)$ and consider the product probability space
\[
\Omega = [0, \infty)^E, \quad \mathbb{P} = \nu^{\otimes E}.
\]
A typical element of $(\Omega, \mathbb{P})$ will be denoted by $\omega = \{\omega(e)\}_{e\in E}$. We define i.i.d random variables $X_e:\Omega \rightarrow [0, \infty)$ as $ X_e(\omega) = \omega(e) $ with law $\nu$. Assume $\nu(\{0\})=0.$
Assigning  $\omega(e)$ as the length of edge $e$ defines a random metric (a priori, a pseudometric) on $X$, known as the first passage percolation (FPP) metric. The metric is defined as follows:
\begin{definition}

Let $\gamma = \{e_1, \dots, e_k\}$ be an edge path. For $\omega \in \Omega$, the $\omega$- length of $\gamma$ is defined as, 
\[
|\gamma|_{\omega}  = \sum_{e \in \gamma} \omega(e)
\]
The FPP distance is defined by, 
\[
d_\omega(x, y) = \inf_\gamma |\gamma|_\omega
\]
where the infimum is over all paths $\gamma$ with terminal vertices $x$ and $y$.
    
\end{definition}
A path realizing $d_\omega(x, y)$ is called a \emph{\( \omega \)}-geodesic. 
\begin{proposition}\label{prop2.13}     \cite[Lemma 2.3, Lemma 2.5]{BT17} 
\begin{enumerate}
\item Let $X$ be a connected graph, and let $\gamma$ be a self avoiding path. Assume $0<b=\mathbb{E} \omega_{e}<\infty$. Then for a.e. $\omega$, there exists $r_{0}=r_{0}(\omega)$ such that for all $i \leq 0 \leq j$,

$$
|\gamma([i, j])|_{\omega} \leq 2 b(j-i)+r_{0}
$$
\item \label{extension} Let $X$ be an infinite connected graph with bounded degree, and let $o$ be some vertex of $X$. Assume $\nu(\{0\})=0$. Then there exists $c>0$ such that for a.e. $\omega$, there exists $r_{1}=r_{1}(\omega)$ such that for all finite path $\gamma$ such that $d(\gamma, o) \leq |\gamma|$, one has

\[|\gamma|_{\omega} \geq c|\gamma|-r_{1}\]
\end{enumerate}
\end{proposition}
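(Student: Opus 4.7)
Both parts are statements about the almost-sure behaviour of sums of i.i.d.\ positive random variables indexed by edges of a fixed combinatorial path (in (1)) or over a countable family of combinatorial paths (in (2)). Accordingly, the plan is to use (1) the two-sided strong law of large numbers together with swallowing the finite number of ``bad'' small-index terms into a random additive constant, and (2) a Cram\'er-type exponential deviation bound combined with a Peierls union bound and Borel--Cantelli to control all sufficiently long self-avoiding paths simultaneously.

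\textbf{Part (1).} I would index the edges of $\gamma$ by $\ZZ$ and set $S_0=0$, $S_j=\sum_{k=1}^{j}\omega(e_k)$ for $j>0$, and $S_i=-\sum_{k=i+1}^{0}\omega(e_k)$ for $i<0$, so that $|\gamma([i,j])|_\omega = S_j - S_i$. Because the edges of the self-avoiding path $\gamma$ are distinct, the random variables $\{\omega(e_k)\}_{k\in\ZZ}$ are genuinely i.i.d.\ with mean $b$, and the strong law of large numbers applied separately in the positive and negative directions gives $S_j/j\to b$ and $-S_i/|i|\to b$ almost surely. Consequently there exists a random $N=N(\omega)$ such that $S_j\le 2bj$ for $j\ge N$ and $-S_i\le 2b|i|$ for $|i|\ge N$, and then
\[
S_j - S_i \;\le\; 2b(j+|i|) \;=\; 2b(j-i) \qquad\text{whenever } j\ge N,\ |i|\ge N.
\]
For the finitely many remaining $(i,j)$ with $i\le 0\le j$ and $\min(j,|i|)<N$, the quantity $S_j - S_i$ is pointwise finite, so setting $r_0(\omega)$ to be the maximum of $S_j - S_i - 2b(j-i)$ over this finite collection gives the desired bound.

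\textbf{Part (2).} Let $D$ denote an upper bound on the vertex degree of $X$. For a path $\gamma$ of combinatorial length $n$ with $d(\gamma,\fo)\le n$, the path must meet a vertex in $B_n(\fo)$, and $|B_n(\fo)|\le D^{n+1}$; from any such vertex there are at most $D^n$ self-avoiding paths of length $n$, so the number of candidate paths of length $n$ is at most $D^{2n+2}$. The key input is that, since $\nu(\{0\})=0$, monotone convergence gives $\EE[e^{-\lambda X_e}]\to 0$ as $\lambda\to\infty$. Therefore I can fix $\lambda_0$ large enough that $\EE[e^{-\lambda_0 X_e}]\le e^{-3\log D}$, and then choose $c>0$ so small that $\lambda_0 c+\log\EE[e^{-\lambda_0 X_e}]\le -2\log D$. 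For any fixed candidate path $\gamma$ of length $n$, the Chernoff--Cram\'er bound yields
\[
\PP\bigl(|\gamma|_\omega \le cn\bigr) \;\le\; e^{\lambda_0 cn}\,\EE\bigl[e^{-\lambda_0 X_e}\bigr]^{n} \;\le\; e^{-2n\log D}.
\]
A union bound over the at most $D^{2n+2}$ candidate paths of length $n$ gives $\PP(\exists\,\gamma\text{ of length }n,\ d(\gamma,\fo)\le n,\ |\gamma|_\omega\le cn)\le D^{2}\cdot e^{-0}\cdot $ a summable tail in $n$; Borel--Cantelli then produces a random $N_1(\omega)$ beyond which no such bad path exists, and setting $r_1(\omega)=cN_1(\omega)$ (plus the contribution of the finitely many shorter paths, each automatically $\omega$-positive) finishes the proof.

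\textbf{Main obstacle.} The only delicate point is (2), specifically the calibration of $c$ and $\lambda_0$: one needs the exponential decay rate of the weight sum to beat the entropy rate $2\log D$ of the counting bound. This is exactly where the hypothesis $\nu(\{0\})=0$ (as opposed to merely $\nu\bigl(\{0\}\bigr)$ small) is used, via $\lim_{\lambda\to\infty}\EE[e^{-\lambda X_e}]=0$; without this input the Cram\'er exponent would not exceed the combinatorial growth of the graph and the union bound would not close. Part (1) is standard once one notices that ``self-avoiding'' is the hypothesis that makes the edge weights genuinely i.i.d.\ along $\gamma$, so the two directions of the SLLN can be combined cleanly.
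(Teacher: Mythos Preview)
The paper does not prove Proposition~\ref{prop2.13} at all; it is quoted verbatim from \cite[Lemmas~2.3 and~2.5]{BT17} and used as a black box. Your outline is the standard argument and is essentially what appears in \cite{BT17}: SLLN along the two halves of the bi-infinite path for~(1), and a Chernoff/Peierls estimate plus Borel--Cantelli for~(2).

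Two small points are worth tightening. In~(1), the set of pairs $(i,j)$ with $i\le 0\le j$ and $\min(j,|i|)<N$ is \emph{not} finite; the clean fix is to observe that once the SLLN gives $N(\omega)$, one has $S_j\le 2bj + C_+$ for \emph{all} $j\ge 0$ and $-S_i\le 2b|i| + C_-$ for \emph{all} $i\le 0$, where $C_\pm$ absorb the finitely many indices below $N$, and then take $r_0=C_++C_-$. In~(2), your path count is slightly off (the path need only \emph{meet} $B_n(\fo)$, not start there, so one should count from the starting vertex in $B_{2n}(\fo)$, giving at most $(D+1)D^{3n}$ or the like), but this only changes the entropy rate by a constant factor and your calibration of $\lambda_0,c$ absorbs it. More substantively, the Chernoff step uses independence of the edge weights along $\gamma$, so the argument as written proves~(2) only for paths without repeated edges; indeed the statement is false otherwise (take $\gamma$ traversing a single edge $e$ near $\fo$ back and forth $n$ times: then $|\gamma|_\omega/|\gamma|=\omega(e)$, which can be below any fixed $c$). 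This is harmless for the application in the paper, where~(2) is invoked only on $\omega$-geodesics, which are automatically self-avoiding since $\nu(\{0\})=0$.
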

\section{middle-recurrence of sublinear lines}
The idea of middle-recurrence as a characterization for Morse geodesics was first introduced by Drutu-Mozes-Sapir in \cite{DMS} and then further studied by \cite{ADT}. The characterization smartly circumvents the need for quasi-geodesic segments which renders it ideal for the application of first passage percolations. In this section, we follow the general idea of the proof of these papers and generalize the result analogously to sublinearly Morse quasi-geodesic rays.
\begin{definition}
A path in a metric space $X$ is a continuous function $p: I\rightarrow X$ from an interval $I$ to $X$. A simple path is a path that is injective. We sometimes call it a self-avoiding path in a graph, as it never visits the same vertex twice. \newline
If $p$ is any finite path in $X$, let $|p|$ denote the distance between its endpoints, and let $\ell(p)$ denote its arc length. Then the slope of $p$ is defined as:
\[
sl(p) = \frac{\ell(p)}{|p|}
\]

\end{definition}
\begin{definition}

 For a path $\gamma$, if $a,b \in \gamma$ then  $\gamma_{[a,b]}$ the set of $x \in \gamma$ lying between $a$ and $b$. The middle third of $\gamma_{[a,b]}$ is defined as follows: \[ \gamma_{\frac{1}{3}[a,b]} := \biggl\{x\in \gamma : \min \{d(x,a),d(x,b)\} \ge \frac{1}{3}\cdot d(a,b)\biggr\}\]

\end{definition}
\begin{definition}[$\frac{1}{3}$-middle recurrence]
    We say that a path (geodesic line or quasi-geodesic) $\beta$ is $\frac{1}{3}$-middle recurrent if for every $C\geq1$ there is a constant $c$ (depends on $C$ and $\beta$) such that the following holds:
    for any path $p$ with endpoints $a,b \in \beta$ satisfying $\ell(p)\leq Cd(a,b)$, intersects the $c\cdot \kappa$-neighborhood of the middle third of $\beta[a,b]$ for some sublinear function $\kappa$. That is,
\[ p \cap \calN_{\kappa} (\beta_{\frac 13[a,b]}, c )\neq \emptyset.\]
\end{definition}
\begin{lemma}\cite[Lemma 3.3]{ADT}\label{lemma33}
Let $\beta$ be a $\kappa'$–radius–contracting quasi-geodesic and suppose that $p$ is a path (of slope $D$) at distance
at least $K$  from $\beta$ whose endpoints have distance exactly $K$ from $\beta$. Let $|p|$ denote the distance between the endpoints of $p$. Let $s, e$ be the endpoints of $p$. Then
\[ \frac{\kappa'(K)}{2K} \geq \frac{1-(2K+\kappa'(K))/d(s, e)}{sl(p)}\]
\end{lemma}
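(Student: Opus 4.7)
The plan is to derive the inequality by comparing $d(s,e)$ with the distance between nearest-point projections of the endpoints $s, e$ onto $\beta$, bounding the latter from above by tracking how much the projection moves along $p$, using the $\kappa'$-radius-contracting property.

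First, let $\pi_\beta$ denote a nearest-point projection onto $\beta$. Since $d(s, \beta) = d(e, \beta) = K$, the projections $\pi_\beta(s)$ and $\pi_\beta(e)$ satisfy $d(s, \pi_\beta(s)) = d(e, \pi_\beta(e)) = K$, so the triangle inequality yields the lower bound
\[
d(\pi_\beta(s), \pi_\beta(e)) \ \geq \ d(s,e) - 2K.
\]

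For the upper bound, I partition $p$ by choosing points $x_0 = s, x_1, \ldots, x_N = e$ traversed consecutively along $p$ with $d(x_i, x_{i+1}) \leq 2K$. Since metric distance is bounded by arc length, this can be arranged with $N \leq \ell(p)/(2K) + 1$. For each $i$, both $x_i$ and $x_{i+1}$ lie at distance at least $K$ from $\beta$ while being within distance $2K$ of each other; the $\kappa'$-radius-contracting property then gives $d(\pi_\beta(x_i), \pi_\beta(x_{i+1})) \leq \kappa'(K)$. Summing via triangle inequality,
\[
d(\pi_\beta(s), \pi_\beta(e)) \ \leq \ N \kappa'(K) \ \leq \ \frac{\ell(p)\,\kappa'(K)}{2K} + \kappa'(K).
\]

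Combining the two bounds produces $d(s,e) - 2K - \kappa'(K) \leq \ell(p)\,\kappa'(K)/(2K)$. Since $\ell(p) = sl(p) \cdot d(s,e)$, dividing by $\ell(p)$ and rearranging yields precisely the stated inequality. The main obstacle is matching constants in the radius-contracting property: one needs it to apply to pairs separated by twice their distance to $\beta$. If the formal definition requires a strictly smaller separation, one subdivides more finely (pieces of length $K$ rather than $2K$) and absorbs the extra factor into $\kappa'$ using its concavity and sublinearity, or invokes the precise radius-contracting estimate from \cite{ADT}. The slope hypothesis on $p$ plays no role in the argument itself—it enters only in how the lemma is applied downstream.
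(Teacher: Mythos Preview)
The paper does not prove this lemma; it is quoted verbatim from \cite[Lemma 3.3]{ADT} and used as a black box inside the proof of Proposition~\ref{prop3.5}. So there is no ``paper's own proof'' to compare against. Your argument is the standard projection--subdivision proof (and is essentially the one given in \cite{ADT}): bound $d(\pi_\beta(s),\pi_\beta(e))$ from below by $d(s,e)-2K$ via the triangle inequality, and from above by covering $p$ with $\lceil \ell(p)/(2K)\rceil$ balls whose projections each have diameter at most $\kappa'(K)$; the algebra you wrote out then gives exactly the stated inequality.

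The one step you should make explicit is why a single application of the radius-contracting property covers a piece of arc length $2K$, not just $K$. The point is that if $x_i,x_{i+1}$ are consecutive subdivision points with arc length $\le 2K$ between them, then the midpoint $m$ \emph{along $p$} satisfies $d(m,x_i),d(m,x_{i+1})\le K$ (arc length bounds metric distance) and $d(m,\beta)\ge K$ (since all of $p$ stays at distance $\ge K$), so the ball $B(m,K)$ is disjoint from $\beta$ and contains both $x_i$ and $x_{i+1}$. This is exactly the trick that produces the $2K$ in the denominator rather than $K$; you flag the issue in your final paragraph, and this is the clean resolution---no appeal to concavity or finer subdivision is needed.
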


\begin{proposition}[$\kappa$-Morse implies middle recurrence] \label{prop3.5}
Let $\beta$ be a bi-infinite geodesic line. Assume that  $\beta$ is $\kappa$-Morse with Morse gauge $m_\beta(q, Q)$. Let $p$ be any simple path with endpoints $a, b \in \beta$ and such that $\ell(p) < C d(a, b)$. Then there exists a $\kappa'$-sublinear neighborhood of $\beta$, where $\kappa'$ is the contracting function of $\beta$, and whose multiplicative constant $c(C, \beta)$ depending only on $\beta$ and $C$ such that 
\[ p \cap \mathcal{N}_{\kappa'} (\beta_{\frac 13[a,b]}, c(C, \beta)) \neq \emptyset.\]

\end{proposition}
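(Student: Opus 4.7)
The plan is to argue by contradiction via a projection-shadow estimate exploiting the $\kappa'$-weakly contracting property of $\beta$ from Proposition~\ref{contracting}. Set $T:=d(a,b)$ and let $\pi_\beta$ be the resulting $\kappa$-projection, with contracting constants $C_1,c_2$ and $\kappa$-projection constants $D_1,D_2$ depending only on $\beta$. I may assume $T$ is sufficiently large (otherwise the conclusion holds trivially by enlarging $c$), and suppose for contradiction that $p\cap\mathcal{N}_{\kappa'}(\beta_{\frac{1}{3}[a,b]},c)=\emptyset$ for a constant $c=c(C,\beta)$ to be fixed at the end.

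Parametrize $p:[0,L]\to X$ by arclength with $L\leq CT$, and view the ``shadow'' $f(t):=\pi_\beta(p(t))$ as a quasi-continuous map from $[0,L]$ to $\beta$ with $f(0)=a$ and $f(L)=b$. The first step is a lower bound on the coverage: I would show that $f$ covers $\beta_{[a,b]}$ up to gaps of total size $o(T)$. Indeed, by Definition~\ref{Def:generalContracting}, whenever $d(p(s),p(t))\leq C_1\,d(p(s),\beta)$ one has $\mathrm{diam}(f(s)\cup f(t))\leq c_2\kappa'(\|p(s)\|)\leq c_2\kappa'(\|a\|+CT)=o(T)$. A coarse intermediate value argument then places every point of $\beta_{[a,b]}$ within $o(T)$ of $f([0,L])$, so the middle third $\beta_{\frac{1}{3}[a,b]}$ lies in the image of $f$ restricted to an $o(T)$-thickening $S$ of the preimage of the middle third, yielding $|f(S)|\geq T/3-o(T)$.

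The second step is an upper bound on $|f(S)|$. The avoidance hypothesis, combined with the $\kappa$-projection inequality of Definition~\ref{weakprojection}, forces $d(p(t),\beta)\geq c'\kappa'(\|p(t)\|)$ for $t\in S$, where $c'=c'(c,\beta)\to\infty$ as $c\to\infty$. Subdividing $S$ into intervals $[s_i,s_{i+1}]$ with $s_{i+1}-s_i=C_1\,d(p(s_i),\beta)\geq C_1 c'\kappa'(\|p(s_i)\|)$, the contracting property bounds each shadow shift by $c_2\kappa'(\|p(s_i)\|)$, giving a shift-to-time ratio of at most $c_2/(C_1 c')$ independent of the point. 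Summing yields
\[
|f(S)| \;\leq\; |S|\cdot\frac{c_2}{C_1 c'} \;\leq\; \frac{CT\,c_2}{C_1\,c'}.
\]

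Combining the two bounds and dividing by $T$ yields $1/3-o(1)\leq Cc_2/(C_1 c')$, which fails once $c$ is taken large enough that $c'>3Cc_2/C_1$; this yields the contradiction and determines $c(C,\beta)$. The main obstacle will be the careful bookkeeping of the two sublinear error terms: the subdivision in the upper bound must remain within $S$ (which is the reason for pre-thickening the middle third by $c_2\kappa'(\|a\|+CT)$), and the coarse intermediate value argument underpinning the lower bound must genuinely place every point of $\beta_{[a,b]}$ within $o(T)$ of $f([0,L])$, a fact that follows from the one-sided quasi-continuity of $f$ but requires care in handling the discontinuities coming from the portions of $p$ that touch or nearly touch $\beta$.
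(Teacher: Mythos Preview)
Your projection-shadow strategy is sound and genuinely different from the paper's argument. The paper never tracks the shadow $f=\pi_\beta\circ p$ directly; instead it negates the conclusion to obtain a family of paths $p_i$ that exit a fixed \emph{linear} neighborhood $Ln(\beta,c_1)=\{x: d(x,\beta)\le c_1\Norm{x}\}$, observes that outside this cone the $\kappa'$-contracting bound $c_2\kappa'(\Norm{x})$ can be rewritten as $\tfrac{c_2}{c_1}\kappa'(d(x,\beta))$ (i.e.\ radial contracting), and then feeds the resulting bounded-slope excursion $p_i'$ into Lemma~\ref{lemma33} of \cite{ADT} to force $\kappa'(K_i)/K_i\ge D>0$ for $K_i\to\infty$. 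Your argument packages the same contracting input more directly, trading the cone/radial-contracting reduction and the ADT lemma for an explicit shift-to-time ratio; it is shorter once the bookkeeping is done.

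There is one genuine gap you should repair: the assertion $c_2\kappa'(\Norm{a}+CT)=o(T)$ is false without further input, because $\Norm{a}$ is not controlled by $T$ a priori (take $\kappa'=\log$ and $\Norm{a}=e^{T}$). The fix is already hidden in your avoidance hypothesis: since $a\in p$ and $d(a,\beta_{\frac13[a,b]})=T/3$, avoidance forces $T/3>c\,\kappa'(\Norm{a})$, hence $\kappa'(\Norm{a})<T/(3c)$; by concavity/subadditivity $\kappa'(\Norm{a}+CT)\le \kappa'(\Norm{a})+C\kappa'(T)\le T/(3c)+C\kappa'(T)$. Thus your coarse-IVT error is $O(T/c)+C\kappa'(T)$, not $o(T)$; combined with $T>3c$ (so $\kappa'(T)/T\le\kappa'(3c)/(3c)\to0$), all three terms in your final inequality vanish as $c\to\infty$, which is what you need. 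With this correction, and using step size $\max\{1,C_1 d(p(s_i),\beta)\}$ so the subdivision terminates even where $p$ grazes $\beta$, your plan goes through. The cleanest way to organize the lower bound is to pick the last subdivision index $I$ with $f(s_I)\le T/3$ and the first $J>I$ with $f(s_J)\ge 2T/3$; then every $s_i$ with $I<i<J$ has $f(s_i)$ in the middle third, so your $d(p(s_i),\beta)\ge c'\kappa'(\Norm{p(s_i)})$ applies there without needing $S$ to be an interval.
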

\begin{proof}
    To proceed, let 
    $\kappa'$ denote the contracting function of $\beta$ of Proposition~\ref{contracting}. By way of contradiction, suppose there does not exist any sublinear neighborhood of $\beta$ in which $p$ intersects the middle third. We first define a \emph{linear neighborhood} with constant $c$:
     
  \[Ln(\beta, c) : = \{ x \in X | \, d(x, \beta) \leq c \cdot d(\go, x)\}.\]
  The assumption implies that there exists a constant $c_1(C)$ such that there exists a family of paths $\{p_i \}$ with slope at most $C$ such that for arbitrarily large $t$, there exists a $p_i$ and $t$ such that  $p_i(t)$ is outside of the $c_1$-linear neighborhood of the middle third of the geodesic segment connecting the endpoints of $p_i$ (which we denote $s_i$ and $e_i$). Without loss of generality we can assume $0<c_1 <1.$ Moreover, we can assume that 
  \[d(s_i, \go) \geq i.\]
  Since the path $p_i$ is chosen to traverse outside of the $Ln(\beta, c_1),$ we can assume without loss of generality that eventually there exists a constant $c_3$ such that 
  \[
  d(e_i, \go) \geq c_3 d(s_i, \go).
  \]

  Given any such $p_i$, let $p_i'$ be a connected segment of $p_i$ that is outside the $c_1$-linear neighborhood of $\beta$ and does not intersect the $c_1$-linear neighborhood of the middle third. The endpoints of $p'_i$ we denote $s'_i$ and $e'_i$.
  
  Since $\beta$ is $\kappa'$-contracting with respect to distance from the base point. For any ball $B$ disjoint from $\beta$ and centered at a point $x \in p'_i$, 
  \begin{equation}\label{contractingball}
  \diam(\pi_\beta(B)) < c_2 \kappa'(d(x, \go)).
  \end{equation}
  Since $x \in p'_i$ is outside of the $c_1$-linear neighborhood, we have that
  \[ c_1 d(x, \go) \leq d(x, \beta)  \]
  thus we have
  \[\diam(\pi_\beta(B)) < c_2 \kappa'(d(x, \go))
\leq c_2 \kappa'(\frac{1} {c_1} d(x, \beta))\leq \frac{c_2}{c_1} \kappa'(d(x, \beta)).\]
Thus, such balls are $\kappa'$-radius-contracting with constant $c_2/c_1$. Thus we have proven the following claim:
\begin{claim}\label{radial-contracting}
Let $\beta$ be a sublinearly Morse geodesic line that is 
$\kappa'$-contracting in the sense of Definition~\ref{Def:generalContracting} with constant $c_2$. Then any ball disjoint from $\beta$ and whose center is outside of the $c_1$-linear-neighborhood of $\beta$ has $\kappa'$-radius-contracting property with constant $\frac{c_2}{c_1}$.
\end{claim}

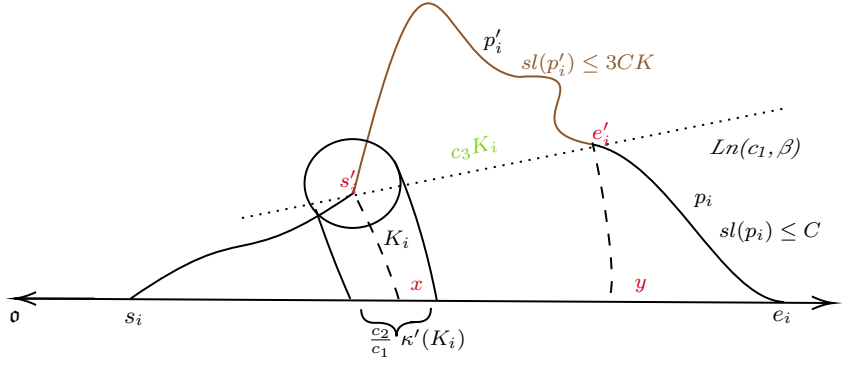
\begin{figure}
    \centering
\tikzset{every picture/.style={line width=0.75pt}} 

\begin{tikzpicture}[x=0.75pt,y=0.75pt,yscale=-1,xscale=1]

\draw    (50.24,164.76) -- (11.86,164.76) ;
\draw [shift={(9.86,164.76)}, rotate = 360] [color={rgb, 255:red, 0; green, 0; blue, 0 }  ][line width=0.75]    (10.93,-3.29) .. controls (6.95,-1.4) and (3.31,-0.3) .. (0,0) .. controls (3.31,0.3) and (6.95,1.4) .. (10.93,3.29)   ;
\draw  [dash pattern={on 0.84pt off 2.51pt}]  (124.62,124.12) -- (399.75,68.51) ;
\draw [color={rgb, 255:red, 139; green, 87; blue, 42 }  ,draw opacity=1 ]   (264.25,52.92) .. controls (312.43,49.39) and (255.73,79.26) .. (301.61,87.02) ;
\draw [color={rgb, 255:red, 139; green, 87; blue, 42 }  ,draw opacity=1 ]   (180.9,111.49) .. controls (222.96,-57.87) and (221.43,47.98) .. (264.25,52.92) ;
\draw    (301.61,87.02) .. controls (339.84,94.78) and (366.72,165.12) .. (398.07,166.53) ;
\draw    (68.49,164.76) .. controls (129.66,122.42) and (119.72,153.83) .. (180.9,111.49) ;
\draw  [draw opacity=0][dash pattern={on 4.5pt off 4.5pt}] (203.67,165) .. controls (201.2,156.72) and (197.36,146.41) .. (192.37,135.1) .. controls (188.42,126.14) and (184.23,117.61) .. (180.13,110.06) -- (181.27,132.81) -- cycle ; \draw  [dash pattern={on 4.5pt off 4.5pt}] (203.67,165) .. controls (201.2,156.72) and (197.36,146.41) .. (192.37,135.1) .. controls (188.42,126.14) and (184.23,117.61) .. (180.13,110.06) ;  
\draw  [draw opacity=0][dash pattern={on 4.5pt off 4.5pt}] (310.53,161.91) .. controls (312.21,155.03) and (310.32,133) .. (305.43,106.07) .. controls (304.16,99.08) and (302.79,92.31) .. (301.37,85.96) -- (297.56,103.19) -- cycle ; \draw  [dash pattern={on 4.5pt off 4.5pt}] (310.53,161.91) .. controls (312.21,155.03) and (310.32,133) .. (305.43,106.07) .. controls (304.16,99.08) and (302.79,92.31) .. (301.37,85.96) ;  
\draw   (156.18,106.7) .. controls (156.18,94.32) and (167.02,84.28) .. (180.39,84.28) .. controls (193.76,84.28) and (204.6,94.32) .. (204.6,106.7) .. controls (204.6,119.09) and (193.76,129.13) .. (180.39,129.13) .. controls (167.02,129.13) and (156.18,119.09) .. (156.18,106.7) -- cycle ;
\draw   (222.97,166.06) .. controls (217.86,139.31) and (210.72,115.71) .. (201.54,95.27) ;
\draw   (161.81,119.68) .. controls (167.14,135.68) and (172.96,150.75) .. (179.28,164.92) ;
\draw   (184.72,170.17) .. controls (184.81,174.84) and (187.19,177.12) .. (191.86,177.03) -- (192.45,177.02) .. controls (199.12,176.89) and (202.5,179.15) .. (202.59,183.82) .. controls (202.5,179.15) and (205.78,176.75) .. (212.45,176.62)(209.45,176.68) -- (213.04,176.61) .. controls (217.71,176.52) and (219.99,174.14) .. (219.9,169.47) ;
\draw    (9.86,164.76) -- (421,167.01) ;
\draw [shift={(423,167.02)}, rotate = 180.31] [color={rgb, 255:red, 0; green, 0; blue, 0 }  ][line width=0.75]    (10.93,-3.29) .. controls (6.95,-1.4) and (3.31,-0.3) .. (0,0) .. controls (3.31,0.3) and (6.95,1.4) .. (10.93,3.29)   ;

\draw (63.75,170.25) node [anchor=north west][inner sep=0.75pt]  [font=\scriptsize]  {$s_{i}$};
\draw (390.74,169.26) node [anchor=north west][inner sep=0.75pt]  [font=\scriptsize]  {$e_{i}$};
\draw (172.34,98.66) node [anchor=north west][inner sep=0.75pt]  [font=\tiny,color={rgb, 255:red, 208; green, 2; blue, 27 }  ,opacity=1 ]  {$s_{i} '$};
\draw (299.79,74.43) node [anchor=north west][inner sep=0.75pt]  [font=\tiny,color={rgb, 255:red, 208; green, 2; blue, 27 }  ,opacity=1 ]  {$e_{i} '$};
\draw (208.1,154.53) node [anchor=north west][inner sep=0.75pt]  [font=\tiny,color={rgb, 255:red, 208; green, 2; blue, 27 }  ,opacity=1 ]  {$x$};
\draw (321.3,153.21) node [anchor=north west][inner sep=0.75pt]  [font=\tiny,color={rgb, 255:red, 208; green, 2; blue, 27 }  ,opacity=1 ]  {$y$};
\draw (5.63,169.15) node [anchor=north west][inner sep=0.75pt]  [font=\scriptsize]  {$\mathfrak{o} $};
\draw (194.36,128.86) node [anchor=north west][inner sep=0.75pt]  [font=\tiny]  {$K_{i}$};
\draw (186.74,178.42) node [anchor=north west][inner sep=0.75pt]  [font=\tiny,xslant=0.01]  {$\frac{c_{2}}{c_{1}} \ \kappa '( K_{i})$};
\draw (227.09,86.1) node [anchor=north west][inner sep=0.75pt]  [font=\tiny,color={rgb, 255:red, 126; green, 211; blue, 33 }  ,opacity=1 ,rotate=-347.06]  {$c_{3} K_{i}$};
\draw (245.14,27.53) node [anchor=north west][inner sep=0.75pt]  [font=\tiny]  {$p_{i} '$};
\draw (361.35,82.19) node [anchor=north west][inner sep=0.75pt]  [font=\tiny,xslant=0.27]  {$Ln( c_{1} ,\beta )$};
\draw (263.95,39.38) node [anchor=north west][inner sep=0.75pt]  [font=\tiny,color={rgb, 255:red, 139; green, 87; blue, 42 }  ,opacity=1 ]  {$sl( p_{i} ') \leq 3CK$};
\draw (351.28,110.42) node [anchor=north west][inner sep=0.75pt]  [font=\tiny]  {$p_{i}$};
\draw (363.73,124.06) node [anchor=north west][inner sep=0.75pt]  [font=\tiny]  {$sl( p_{i}) \leq C$};

\end{tikzpicture}
  \caption{Behavior of the path outside $c_1$-linear neighborhood of $\kappa$-Morse geodesic line $\beta$.}
    \label{fig:enter-label}
\end{figure}
Secondly, suppose the end points of $p_i'$ are $(s'_i, e'_i)$. 
  and we know that by construction \[
  d(s_i, e_i) = c_3 d(s_i, \go).
  \]
  By construction there exists points $x, y \in \beta$ such that
  \[d(x, s_i') = c_1 d(\go, s_i') \,\text{ and }\, d(y, e_i') = c_1 d(\go, e_i')\]
  Now estimate,
 \begin{align*}
 d(\go,s_i')
 &\leq d(x,\go)+d(x,s_i')\\
 &\leq d(s_i,\go)+\frac{c_3-1}{3}d(s_i,\go)+d(s_i',\go)
 \end{align*} implies,
 \[(1-c_1)d(\go,s_i')\leq  \bigg(\frac{c_3-1}{3} +1\bigg)d(s_i, \go)\]
 Hence,
 \begin{equation}\label{estimate1}
   d(x,e_i')\leq \frac{c_1}{1-c_1} \bigg(\frac{c_3-1}{3} +1\bigg)d(s_i, \go)
   \end{equation}
Similarly we have,
\begin{equation}\label{estimate2}
 d(y,e_i')\leq \frac{c_1}{1-c_1} \bigg(\frac{2(c_3-1)}{3} +1\bigg)d(s_i, \go)   
\end{equation}   
 By the generalized triangle inequality and using estimate~\ref{estimate1} and ~\ref{estimate2} we have, 
 \begin{align*}
   d(s'_i, e'_i) 
   &\geq d(x, y) -\frac{c_1}{1-c_1} \bigg(\frac{c_3-1}{3} +1\bigg)d(s_i, \go)- \frac{c_1}{1-c_1} \bigg(\frac{2(c_3-1)}{3} +1\bigg)d(s_i, \go)\\
   & \geq \frac{1}{3}d(s_i,e_i)-\frac{c_1}{1-c_1}\frac{c_3+1}{c_3-1}d(s_i,e_i)\\
   & \geq \frac{1}{3} \cdot \frac{1}{K}\cdot d(s_i,e_i), \ \  \ \text{where}\ \frac{1}{K} :=1- \frac{c_1}{3(1-c_1)}\frac{c_3+1}{c_3-1}.
 \end{align*}
 We can choose $c_3$ large enough compared to $c_1$ so that $K$ is uniformly bounded above. 
  Thus 
  \[ sl(p'_i) = \frac{|p'_i|}{d(s'_i, e'_i)} \leq \frac{|p'_i|}{\frac{1}{3K}d(s_i, e_i)} \leq
   \frac{|p_i|}{\frac{1}{3K}d(s_i, e_i)} = 3KC.\]

Furthermore, we claim that for large enough $i$, 
  $d(s_i, e_i)$ does not grow sublinearly with respect to 
  $d( \go, s_i)$: otherwise, since slope of $p_i$ are bounded, eventually $p_i$ will not be outside of the linear neighborhood of $\beta$, thus there exists a constant $c_2>0$ where
  \[
   d(s_i, e_i) \geq c_2 d(\go, s_i).
  \]
  Let $K_i:= d(s'_i, \beta)$. A slight adaptation of the Lemma~\ref{lemma33} yields 
   \[ \frac{\kappa'(K_i)}{K_i} \geq \frac{6c_1C-(2 K_i +c_2'\kappa'(K_i))/d(s'_i, e'_i)}{c_2'(3KC+1)}\]
   Where $c_2'=\frac{c_2}{c_1}$ from claim ~\ref{contractingball}.\newline
   As we argued before,  as $i \to \infty$,  $d(s'_i, e'_i)$ grows linearly with respect to $d(s_i, e_i)$ and hence $K_i$. Thus by construction, there exists $c_4$ such that for all $i$ large enough, $d(s'_i, e'_i) \geq c_4 K_i$ and since $\lim_{t\to\infty}\frac{\kappa'(t)}{t}=0$, there exists a uniform constant $D>0$ depending only on $C, c_1, c_2, c_3,c_4, \kappa, \kappa'$ where 
  \[
  \frac{6c_1C-(2 K_i +c_2'\kappa'(K_i))/c_4K_i}{c_2'(3KC+1)}\geq D.
  \]

  Thus, we have that
  \[\frac{\kappa'(K_i)}{K_i} \geq D
\]
  There exists a maximal $K_i$ for which the inequality holds, which is a contradiction as desired. 

\end{proof}

\begin{lemma} \label{lemma36}   
Let $X$ be an infinite connected graph with bounded degree. Assume $\gamma_{0}$ is a bi-infinite $\kappa$-Morse quasi-geodesic. Then there exists an increasing function $\phi: \mathbb{R}_{+} \rightarrow \mathbb{R}_{+}$such that $\lim _{t \rightarrow \infty} \phi(t)=\infty$, and with the following property. Assume

\begin{itemize}
  \item $x, y$ belong to $\gamma_{0}$.
  \item $x'$ and $y'$ are vertices such that $d\left(x, x'\right)=d\left(y, y'\right)=R$, and $d\left(x', y'\right) \geq 4 R$;
  \item $\gamma$ is a quasi-geodesic joining $x'$ to $y'$, and remains outside of the $R$-neighborhood of $\gamma_{0}$.
\end{itemize}
Then
\[|\gamma| \geq \phi(R) d(x, y)\]
\end{lemma}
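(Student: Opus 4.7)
The plan is to argue by contradiction via the middle-recurrence property (Proposition~\ref{prop3.5}). The intuition is that any path of bounded slope between two points of the $\kappa$-Morse line $\gamma_0$ must enter a sublinear neighborhood of the middle third of the corresponding segment of $\gamma_0$; if $\gamma$ were only linearly long in $d(x, y)$, then the path obtained by attaching the two short extensions $[x, x']$ and $[y', y]$ to $\gamma$ would inherit this slope, and middle recurrence would force $\gamma$ itself to come close to $\gamma_0$, contradicting the hypothesis that $\gamma$ avoids the $R$-neighborhood of $\gamma_0$.

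First, I would form the extension $\tilde{\gamma} := [x, x'] \cup \gamma \cup [y', y]$, a continuous (simple, after pruning) path from $x$ to $y$ with both endpoints on $\gamma_0$, of length $|\tilde{\gamma}| \leq 2R + |\gamma|$. Since $d(x, y) \geq d(x', y') - 2R \geq 2R$, this gives $|\tilde{\gamma}| \leq |\gamma| + d(x, y)$. I would then assume toward contradiction that $|\gamma| \leq C \cdot d(x, y)$ for a fixed $C \geq 1$; applying Proposition~\ref{prop3.5} with slope $C + 1$ provides $z \in \tilde{\gamma}$ and $w$ in the middle third of $\gamma_0|_{[x, y]}$ satisfying $d(z, w) \leq c(C+1, \gamma_0) \cdot \kappa'(|z|)$, where $\kappa'$ is the contracting function from Proposition~\ref{contracting}. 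The next step is to rule out $z$ lying on one of the extensions: if $z \in [x, x']$ then $d(z, x) \leq R$ and $d(x, w) \geq d(x, y)/3$ together force $d(x, y)/3 \leq R + c \cdot \kappa'(|z|)$, which, via the sublinearity of $\kappa'$ and the bound $|z| \leq |x| + R$, is incompatible with $d(x, y)$ sufficiently large compared to $R$. Thus $z \in \gamma$, and combining $d(z, \gamma_0) > R$ with $d(z, \gamma_0) \leq c \cdot \kappa'(|z|)$ gives $R < c \cdot \kappa'(|z|)$, so $|z| > (\kappa')^{-1}(R/c) \to \infty$ with $R$. Bounding $|z| \leq \min(|x'|, |y'|) + |\gamma|$ by the triangle inequality along $\gamma$ then produces a lower bound on $|\gamma|$ that diverges with $R$, from which, after arranging the quantifiers (the contradiction assumption being $|\gamma| \leq \phi(R) \cdot d(x, y)$ with $\phi$ to be determined), one extracts an increasing $\phi$ with $\phi(R) \to \infty$.

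The main obstacle is the scale-dependence of the $\kappa'$-neighborhood: since $\calN_{\kappa'}(\gamma_0, n) = \{v : d(v, \gamma_0) \leq n \cdot \kappa'(|v|)\}$ thickens with $|v| = d(\go, v)$, the middle recurrence gives weaker information when $\gamma_0$ passes far from the basepoint $\go$. Converting the lower bound on $|z|$ into a lower bound on $|\gamma|/d(x, y)$ therefore requires controlling $\min(|x'|, |y'|)$; when both are large compared to $d(x, y)$, one needs a separate distance argument (using that $\gamma$ must travel at least $d(x', y')$ while avoiding the bi-infinite strip $\calN_R(\gamma_0)$, so that the length $|\gamma|$ is automatically large by connectivity). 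A further delicate point is that the constant $c(C, \gamma_0)$ in Proposition~\ref{prop3.5} depends on the slope $C$, so one must carefully track this dependence to produce a single function $\phi$ that is increasing in $R$ and works uniformly across all admissible configurations $(x, y, x', y', \gamma)$.
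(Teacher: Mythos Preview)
Your approach is the paper's: assume $|\gamma| \leq C\,d(x,y)$ for fixed $C$ and arbitrarily large $R$, concatenate to $p = [x,x'] \cup \gamma \cup [y',y]$ of slope at most $C+1$, apply middle recurrence (Proposition~\ref{prop3.5}) to find a point $z \in p$ in the $c\kappa'$-neighborhood of the middle third of $\gamma_0[x,y]$, and contradict $\gamma$'s avoidance of the $R$-neighborhood of $\gamma_0$. The paper is terser at the final step: it does not separate the case $z \in [x,x'] \cup [y',y]$ from $z \in \gamma$, and simply asserts that for $d(x,y)$ large one has $R \geq c\,\kappa'(|z|)$, whence $d(z,\gamma_0) \leq c\,\kappa'(|z|) \leq R$, contradicting the avoidance hypothesis.

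Two of your proposed refinements do not work as written. Your argument to exclude $z \in [x,x']$ hinges on ``$d(x,y)$ sufficiently large compared to $R$'', but the hypotheses only yield $d(x,y) \geq 2R$; with $d(x,y)$ comparable to $R$ the inequality $d(x,y)/3 \leq R + c\,\kappa'(|z|)$ is vacuous, so no contradiction follows. Likewise, your fallback ``separate distance argument'' for the regime where $|x'|, |y'|$ are both large yields only $|\gamma| \geq d(x',y') \geq 4R$, which is far short of $\phi(R)\,d(x,y)$ when $d(x,y) \gg R$. These are exactly the scale-dependence issues you correctly flag as the main obstacle; the paper's proof does not resolve them any more carefully than your outline does---it leaves the control of $|p_i|$ and the location of $p_i$ on $p$ implicit.
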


\begin{proof} Assume by contradiction that there does not exist such a $\phi(R)$, that is, eventually for the paths $\gamma$ long enough becomes a path whose length is bounded above linearly. Then Proposition~\ref{prop3.5} implies that these paths intersect a sublinear neighborhood of the middle third with bounded constants. However, the choice of $x, y$ is such that $\gamma$ is outside of larger and larger linear neighborhoods, thus we have the desired contradiction. Now we start with the formal proof.

Suppose by contradiction that there exists a constant $C>0$, and for every $n$, consider the path $\gamma$ joining the vertices $x',y'$ with $|\gamma| \leq C\cdot d(x,y)$,  such that $d\left(x, x'\right)=d\left(y, y'\right)=R, d\left(x', y'\right) \geq 4 R$, and $\gamma$ avoids the $R$-neighborhood of $\gamma_{0}$, where $R$ is an integer greater than $d(x,y)$. By our assumption, \[4R \leq d(x',y') \leq d(x',x)+d(x,y)+d(y,y')\leq 2R + d(x,y)\]  thus, \[d(x,y) \geq 2R\] Now consider the path $p$ that is a concatenation of  $[x,x']$, $\gamma$ and $[y',y]$, then $|p| \leq 2R+ |\gamma| \leq d(x,y) + C d(x,y) \leq C'd(x,y)$, where $C'=1+C$. Then by Proposition~\ref{prop3.5} there exists a $\kappa'$-sublinear neighborhood of $\gamma_0$ where $\kappa'$ is the contracting function of $\gamma_0$ such that $p$ intersects the $c \cdot \kappa'$-neighborhood of the middle third of $\gamma_0[x,y]$, where $c$ is the multiplicative constant depends only on $\gamma_0$ and $C'.$ Suppose $p_i$ be a point which is in the intersection. Since $d(x,y)\geq R$, by choosing $d(x,y)$ large enough we can assume also $R\geq c \cdot \kappa'(d(\mathfrak{o},p_i))$. This contradicts the assumption of Lemma ~\ref{lemma36} since $\gamma$ avoids $R$- neighborhood of $\gamma_0$. 
\end{proof}

Now we are ready to prove Theorem~\ref{introthm1}:
\begin{theorem}\label{thm38}
Let $X$ be an infinite connected graph with bounded degree. Assume $\EE \omega_e < \infty$ and $\nu(0) = 0$. If $X$ contains a sublinearly Morse bi-infinite quasi-geodesic line, then for almost every $\omega$, there is a bi-infinite geodesic ray $\gamma_\omega \subset X_\omega$. Furthermore, there exists a constant $K$ depending only on $\gamma_0 \subset X$ and $\omega$, such that
\[d_{X_\omega} (\go, \gamma_\omega) \leq K. \]
\end{theorem}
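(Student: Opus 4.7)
The plan is to construct $\gamma_\omega$ as a limit of finite $\omega$-geodesic segments $\gamma_n$ joining $\gamma_0(-n)$ to $\gamma_0(n)$, extracted via a Cantor diagonal argument once the $\gamma_n$ are shown to recur uniformly to a bounded ball around $\go$. Two quantitative inputs drive the proof: Proposition~\ref{prop2.13} provides upper and lower bounds that force $|\gamma_n|$ to grow at most linearly in $n$; and Lemma~\ref{lemma36} forbids long detours of $\gamma_n$ away from $\gamma_0$.

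First I would bound the $\omega$-length: since $\gamma_n$ minimises $\omega$-length and $\gamma_0[-n,n]$ is an admissible path between the same endpoints,
\[
|\gamma_n|_\omega \leq |\gamma_0[-n,n]|_\omega \leq 4bn + r_0
\]
by Proposition~\ref{prop2.13}(1). Concatenating a graph-geodesic from $\go$ to $\gamma_0(-n)$ with $\gamma_n$ and applying Proposition~\ref{prop2.13}(2) to the concatenation (whose hypothesis holds trivially since it starts at $\go$), combined with Proposition~\ref{prop2.13}(1) applied to the initial segment, yields the linear graph-length bound $|\gamma_n| \leq (6bn + 2r_0 + r_1)/c$.

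Next, the key step is to prove that there is $R_0 = R_0(\omega)$ such that $\gamma_n \cap B(\go, R_0) \neq \emptyset$ for every $n$ sufficiently large. Suppose for contradiction that for arbitrarily large $R$ there is $n = n(R)$ with $\gamma_n \cap B(\go, R) = \emptyset$. Set $m \sim n/2$, $x = \gamma_0(-m)$, $y = \gamma_0(m)$, and use the $\kappa$-projection $\pi_{\gamma_0}$ of Definition~\ref{weakprojection}---which, along $\gamma_n$, moves from $\gamma_0(-n)$ to $\gamma_0(n)$ monotonically up to sublinear error---to locate the last vertex $x' \in \gamma_n$ at graph-distance $R$ from $x$ and the first subsequent vertex $y' \in \gamma_n$ at graph-distance $R$ from $y$. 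Using avoidance of $B(\go, R)$ together with the $\kappa$-contracting property (Proposition~\ref{contracting}) to preclude re-entries of $N_R(\gamma_0)$, the sub-path $\gamma_n[x', y']$ remains outside $N_R(\gamma_0)$. Lemma~\ref{lemma36} then forces $|\gamma_n[x', y']| \geq \phi(R) \cdot d(x,y) \geq \phi(R) \cdot n/q_0$ for $n$ large, which contradicts the linear upper bound as soon as $\phi(R)$ exceeds $6bq_0/c$; since $\phi(R) \to \infty$, such $R$ exists.

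Finally, bounded degree makes $B(\go, R_0)$ finite, so some vertex $v \in B(\go, R_0)$ lies on $\gamma_n$ for infinitely many $n$. Splitting $\gamma_n$ at $v$ and applying a standard Cantor diagonal extraction along this subsequence---using bounded degree to extract limits in each finite ball around $v$---produces a bi-infinite $\omega$-geodesic $\gamma_\omega$ through $v$. The desired bound $d_{X_\omega}(\go, \gamma_\omega) \leq d_\omega(\go, v) \leq 2bR_0 + r_0 =: K$ then follows from Proposition~\ref{prop2.13}(1). The main obstacle, as flagged above, is extracting a sub-path of $\gamma_n$ that is genuinely outside $N_R(\gamma_0)$ and not merely outside $B(\go, R)$; resolving this requires sublinearity of the contracting gauge to conclude that along the stretch whose $\kappa$-projections lie in $\gamma_0[-m, m]$, avoidance of the central ball already forces avoidance of the full $R$-neighborhood up to sublinear correction.
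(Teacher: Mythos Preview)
Your overall architecture matches the paper's: form $\omega$-geodesics $\gamma_n$ between $\gamma_0(-n)$ and $\gamma_0(n)$, show they all meet a fixed ball around $\go$, then extract a limit by Arzel\`a--Ascoli. The linear bounds on $|\gamma_n|_\omega$ and $|\gamma_n|$ are correct, and the endgame (diagonal extraction through a finite set) is the same. The genuine gap is exactly the step you flag yourself, and your proposed resolution of it does not work.

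The issue is this: from the assumption that $\gamma_n$ avoids $B(\go,R)$ you need a subsegment of $\gamma_n$ that stays outside $N_R(\gamma_0)$, since that is the hypothesis of Lemma~\ref{lemma36}. Avoidance of the central ball does \emph{not} force this, even ``up to sublinear correction'': the path may hug $\gamma_0$ arbitrarily closely at parameter $\pm m$ (which lies well outside $B(\go,R)$) and only make a short detour near the middle. Your scheme of fixing $x=\gamma_0(-m)$, $y=\gamma_0(m)$ in advance and then searching for $x',y'\in\gamma_n$ with $d(x,x')=d(y,y')=R$ has two problems: such points need not exist (the path need not come within $R$ of a prescribed $\gamma_0(\pm m)$), and even if they do, the segment between them can re-enter $N_R(\gamma_0)$ many times away from $\go$. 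The $\kappa$-contracting property constrains how projections move, not how close the path is to $\gamma_0$; it does not ``preclude re-entries''.

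The paper supplies the missing piece as Lemma~\ref{lemma38}. Assuming $\gamma_n$ avoids the larger ball $B(\go,10R_n)$, one first produces a single point $\gamma_n(r)$ at distance $\geq 5R_n$ from \emph{all} of $\gamma_0$, by a crossing argument: the two half-lines $\gamma_0((-\infty,-10R_n])$ and $\gamma_0([10R_n,\infty))$ are $20R_n$ apart, and the path must transition from being near one to being near the other, so at the first time it is within $10R_n$ of the right half-line it is far from the left half-line and (by hypothesis) far from $\go$. Only then does one take $p<r<q$ to be the last and first times $d(\gamma_n(\cdot),\gamma_0)=R_n$, which guarantees $\gamma_n([p,q])\subset X\setminus N_{R_n}(\gamma_0)$, and sets $x=\gamma_0(i)$, $y=\gamma_0(j)$ as nearest points on $\gamma_0$ to $\gamma_n(p),\gamma_n(q)$. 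In short, the points $x,y$ are determined by where the excursion happens to be, not prescribed in advance.

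A secondary difference: your final contradiction compares the \emph{graph}-length of the excursion to the global linear bound $|\gamma_n|\lesssim n$, which forces you to arrange $d(x,y)\sim n$. The paper instead compares $\omega$-lengths locally: since a subsegment of an $\omega$-geodesic is an $\omega$-geodesic, $|\gamma_n([p,q])|_\omega = d_\omega(\gamma_n(p),\gamma_n(q))$ is at most the $\omega$-length of the competing path through $\gamma_0(i),\gamma_0(j)$, and Proposition~\ref{prop2.13} converts both sides into graph-lengths. This yields $(c\phi(R_n)-b)(j-i)\leq 2bR_n+3r_0+r_1$ with $j-i\geq R_n$, a self-contained contradiction that does not reference $n$ at all.
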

\begin{proof}
Let $\gamma_0$ be a bi-infinite $\kappa$-Morse quasi-geodesic line. By \cite[Lemma 4.2 (1)]{QRT2}, there exists a $\kappa$-Morse bi-infinite geodesic line in the neighborhood $\calN_\kappa( \gamma,m_\gamma(1,0))$. Thus without loss of generality, we can pass to a bi-infinite $\kappa$-Morse geodesic line, which we denote $\gamma_0$. Denote $\go=:\gamma_{0}(0)$. Consider two sequences of vertices $(x_{n})$ and $(y_{n})$ on $\gamma_{0}$ escaping to infinity in opposite directions such that 
\[d(x_n, \go) = n \text{ and } d(y_n, \go) = n.\]

Let $\Omega' \subset \Omega$ be a measurable subset of full measure for which Proposition ~\ref{prop2.13} holds. For all $n$ and for all $\omega$, we pick measurably an $\omega$-geodesic $\gamma_{\omega}^{n} \in X_{\omega}$ between $x_{n}$ and $y_{n}$ (these vertices are in both $X$ and $X_\omega$). 
By Proposition ~\ref{prop2.13}, we have that $d_{\omega}\left(x_{n}, y_{n}\right)$ is finite, so paths between $x_n, y_n$ of finite length whose $\omega$-length is near to $d_\omega(x_n,y_n)$. Any path between $x_n, y_n$ in $X$ has the property $d(\mathfrak{o, \gamma)\leq |\gamma|}$. Therefore by Proposition ~\ref{prop2.13} paths of length $\geq M$ have $\omega$-length going to infinity as $M \rightarrow \infty$. That is, extremely large paths must have large $\omega$-length. Therefore, the infimum of $\omega$-lengths of all such paths exists. Thus geodesic segments $\gamma_{\omega}^{n}$ exists. Furthermore,  let $\gamma^n$ denote the image of $\gamma_{\omega}^{n}$ in $X$. In fact, for an integer $i$, $\gamma^n(i)=\gamma_{\omega}^{n}(i).$
\newline
The proof has two parts, firstly, if we can prove that for all $\omega \in \Omega'$, there exists a constant $R_{\omega}>0$ such that for all $n, d\left(\gamma_{\omega}^{n}, \mathfrak{o}\right) \leq R_{\omega}$, then the main theorem follows by Arzelà–Ascoli theorem. So we shall assume by contradiction that for some $\omega \in \Omega'$, there exists a sequence $R_{n}$ going to infinity such that none of the vertices on the geodesic $\ga_n^\omega$ lies with in $10R_n$ graph distance from the base point $\mathfrak{0}$. 
\begin{lemma}\label{lemma38}
Assuming the above, there exist integers $p<q$ such that
\[d\left(\gamma_{\omega}^{n}(p), \gamma_{0}\right)=d\left(\gamma_{\omega}^{n}(q), \gamma_{0}\right)=R_{n}\]
and such that for all $p \leq k \leq q$,
\[
d\left(\gamma_{\omega}^{n}(k), \gamma_{0}\right) \geq R_{n}
\]and
\[
d\left(\gamma_{\omega}^{n}(p), \gamma_{\omega}^{n}(q)\right) \geq 4 R_{n}\]
\end{lemma}
\begin{proof}  Let $\gamma_{0}(i)$ and $\gamma_{0}(j)$ with $i<0<j$ be the two points at distance $10 R_{n}$ from $o=\gamma_{0}(0)$. Since $\gamma_{0}$ is a $\kappa$-Morse geodesic line, $\gamma_{0}((\infty, i])$ and $\gamma_{0}([j, \infty))$ are distance $20 R_{n}$ from one another. 

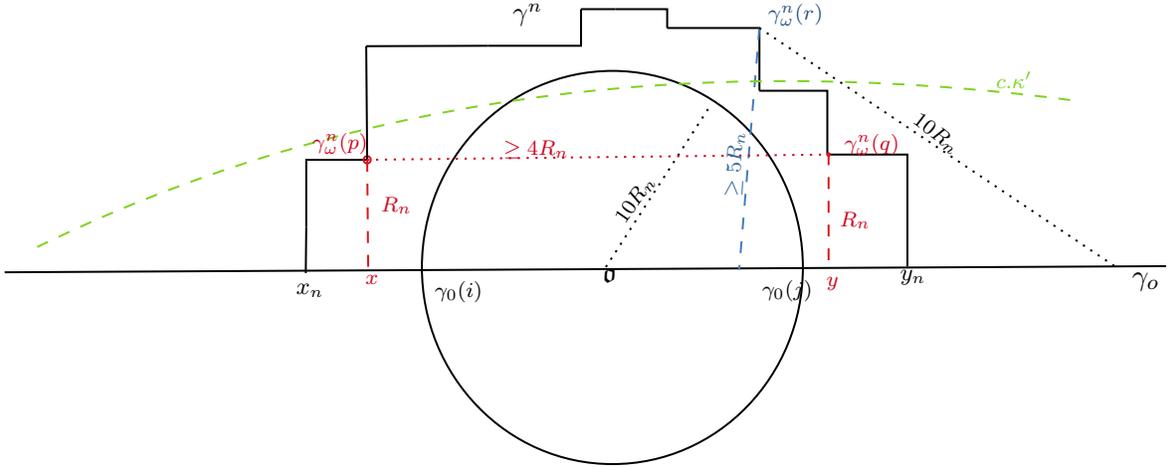
\begin{figure}

\tikzset{every picture/.style={line width=0.75pt}} 

\begin{tikzpicture}[x=0.75pt,y=0.75pt,yscale=-1,xscale=1]

\draw    (96.33,196.51) -- (682.06,193.42) ;
\draw   (248.44,173.61) -- (248.44,139.74) -- (279.17,139.74) ;
\draw  [dash pattern={on 0.84pt off 2.51pt}]  (399.55,193.86) -- (451.95,112.81) ;
\draw   (306.77,194.1) .. controls (306.77,139.29) and (349.78,94.86) .. (402.84,94.86) .. controls (455.9,94.86) and (498.91,139.29) .. (498.91,194.1) .. controls (498.91,248.9) and (455.9,293.33) .. (402.84,293.33) .. controls (349.78,293.33) and (306.77,248.9) .. (306.77,194.1) -- cycle ;
\draw    (248.44,173.61) -- (248.19,196.91) ;
\draw  [color={rgb, 255:red, 218; green, 16; blue, 40 }  ,draw opacity=1 ] (280.97,139.74) .. controls (280.97,138.71) and (280.16,137.87) .. (279.17,137.87) .. controls (278.17,137.87) and (277.36,138.71) .. (277.36,139.74) .. controls (277.36,140.77) and (278.17,141.61) .. (279.17,141.61) .. controls (280.16,141.61) and (280.97,140.77) .. (280.97,139.74) -- cycle ;
\draw [color={rgb, 255:red, 219; green, 25; blue, 25 }  ,draw opacity=1 ] [dash pattern={on 4.5pt off 4.5pt}]  (279.17,139.74) -- (279.68,196.24) ;
\draw    (278.68,82.39) -- (279.17,139.74) ;
\draw   (340,82.19) -- (387.03,82.19) -- (387.03,63.7) ;
\draw   (404.41,63.7) -- (430.48,63.7) -- (430.48,73.21) ;
\draw   (430.48,73.21) -- (476.99,73.21) -- (476.99,104.89) ;
\draw    (387.03,63.7) -- (404.41,63.7) ;
\draw   (476.99,104.89) -- (511.24,104.89) -- (511.24,137.1) ;
\draw   (511.24,137.1) -- (551.63,137.1) -- (551.63,193.6) ;
\draw  [color={rgb, 255:red, 218; green, 16; blue, 40 }  ,draw opacity=1 ] (511.24,137.1) .. controls (511.24,136.74) and (511.53,136.44) .. (511.88,136.44) .. controls (512.23,136.44) and (512.52,136.74) .. (512.52,137.1) .. controls (512.52,137.47) and (512.23,137.76) .. (511.88,137.76) .. controls (511.53,137.76) and (511.24,137.47) .. (511.24,137.1) -- cycle ;
\draw [color={rgb, 255:red, 219; green, 25; blue, 25 }  ,draw opacity=1 ] [dash pattern={on 4.5pt off 4.5pt}]  (511.88,136.44) -- (511.88,192.94) ;
\draw    (278.68,82.39) -- (340,82.19) ;
\draw [color={rgb, 255:red, 66; green, 125; blue, 190 }  ,draw opacity=1 ] [dash pattern={on 4.5pt off 4.5pt}]  (476.99,73.21) -- (466.77,195.19) ;
\draw [color={rgb, 255:red, 208; green, 2; blue, 27 }  ,draw opacity=1 ] [dash pattern={on 0.84pt off 2.51pt}]  (279.17,139.74) -- (511.24,137.1) ;
\draw  [dash pattern={on 0.84pt off 2.51pt}]  (476.99,73.21) -- (656.42,193.6) ;
\draw  [color={rgb, 255:red, 126; green, 211; blue, 33 }  ,draw opacity=1 ][dash pattern={on 4.5pt off 4.5pt}] (633.75,109.58) .. controls (438.25,84.16) and (263.64,109.36) .. (109.93,185.16) ;

\draw (241.87,201.04) node [anchor=north west][inner sep=0.75pt]  [font=\scriptsize]  {$x_{n}$};
\draw (546.53,193.91) node [anchor=north west][inner sep=0.75pt]  [font=\scriptsize]  {$y_{n}$};
\draw (401.06,168.76) node [anchor=north west][inner sep=0.75pt]  [font=\tiny,rotate=-304.84]  {$10R_{n}$};
\draw (663.5,195.08) node [anchor=north west][inner sep=0.75pt]  [font=\small]  {$\gamma _{o}$};
\draw (311.83,199.42) node [anchor=north west][inner sep=0.75pt]  [font=\tiny]  {$\gamma _{0}( i)$};
\draw (476.88,198.89) node [anchor=north west][inner sep=0.75pt]  [font=\tiny]  {$\gamma _{0}( j)$};
\draw (250.14,124.63) node [anchor=north west][inner sep=0.75pt]  [font=\tiny,color={rgb, 255:red, 208; green, 2; blue, 27 }  ,opacity=1 ]  {$\gamma _{\omega }^{n}( p)$};
\draw (518.51,125.16) node [anchor=north west][inner sep=0.75pt]  [font=\tiny,color={rgb, 255:red, 208; green, 2; blue, 27 }  ,opacity=1 ]  {$\gamma _{\omega }^{n}( q)$};
\draw (284.78,156.92) node [anchor=north west][inner sep=0.75pt]  [font=\tiny,color={rgb, 255:red, 208; green, 2; blue, 27 }  ,opacity=1 ]  {$R_{n}$};
\draw (516.12,164.34) node [anchor=north west][inner sep=0.75pt]  [font=\tiny,color={rgb, 255:red, 208; green, 2; blue, 27 }  ,opacity=1 ]  {$R_{n}$};
\draw (480.03,59.68) node [anchor=north west][inner sep=0.75pt]  [font=\tiny,color={rgb, 255:red, 33; green, 76; blue, 127 }  ,opacity=1 ]  {$\gamma _{\omega }^{n}( r)$};
\draw (457.7,159.85) node [anchor=north west][inner sep=0.75pt]  [font=\tiny,color={rgb, 255:red, 51; green, 103; blue, 161 }  ,opacity=1 ,rotate=-273.72,xslant=-0.07]  {$\geq 5R_{n}$};
\draw (346.42,128.43) node [anchor=north west][inner sep=0.75pt]  [font=\tiny,color={rgb, 255:red, 208; green, 2; blue, 27 }  ,opacity=1 ]  {$\geq 4R_{n}$};
\draw (558.09,112.87) node [anchor=north west][inner sep=0.75pt]  [font=\tiny,rotate=-37.66]  {$10R_{n}$};
\draw (276.86,196.39) node [anchor=north west][inner sep=0.75pt]  [font=\tiny,color={rgb, 255:red, 208; green, 2; blue, 27 }  ,opacity=1 ]  {$x$};
\draw (509.45,197.44) node [anchor=north west][inner sep=0.75pt]  [font=\tiny,color={rgb, 255:red, 208; green, 2; blue, 27 }  ,opacity=1 ]  {$y$};
\draw (351.17,59.32) node [anchor=north west][inner sep=0.75pt]  [font=\footnotesize]  {$\gamma ^{n}$};
\draw (396.48,192.91) node [anchor=north west][inner sep=0.75pt]  [font=\small]  {$\mathfrak{o} $};
\draw (594.86,93.68) node [anchor=north west][inner sep=0.75pt]  [font=\tiny,color={rgb, 255:red, 126; green, 211; blue, 33 }  ,opacity=1 ]  {$c.\kappa '$};

\end{tikzpicture}
\caption{Path $\gamma_n$ joining $x_n, y_n$ are outside the ball $B(\mathfrak{0},10R_n)$}
\end{figure}

Let $r$ be the first time integer such that $d\left(\gamma_{\omega}^{n}(r), \gamma_{0}([j, \infty))\right)=10 R_{n}$. Which says there exists $y \in \{\gamma_0(j),\gamma_0(j+1),..\}$ such that $d\left(\gamma_{\omega}^{n}(r), y\right)=10 R_{n}$. Now for every vertex $x \in \gamma_0(-\infty, i),$\[10R_n \leq d(\gamma^n_\omega (r),y) + d(y,x)= d(\gamma^n_\omega (r),x)\] Therefore, $d\left(\gamma_{\omega}^{n}(r), \gamma_{0}((\infty, i])\right) \geq 10 R_{n}$. and since we also have $d\left(\gamma_{\omega}^{n}(r), o\right) \geq 10 R_{n}$, we deduce that

$$
d\left(\gamma_{\omega}^{n}(r), \gamma_{0}\right) \geq 5 R_{n}
$$

If $P= max \{ k\leq r\ :d (\gamma^n_\omega(k), \gamma_0) \leq R_n\}$ and $Q= min \{ k\geq r\ : d (\gamma^n_\omega(k), \gamma_0) \leq R_n\}$ then we get $p\leq P$ and $q\geq Q$ be the largest  and the smallest integer respectively such that

$$
d\left(\gamma_{\omega}^{n}(p), \gamma_{0}\right)=d\left(\gamma_{\omega}^{n}(q), \gamma_{0}\right)=R_{n}
$$

Since we are in a connected graph, clearly for all $p \leq k \leq q$,

$$
d\left(\gamma_{\omega}^{n}(k), \gamma_{0}\right) \geq R_{n}
$$

Suppose, $x$ and $y$ are points on $\gamma_{0}$ such that $d\left(\gamma_{\omega}^{n}(p), x\right)=d\left(\gamma_{\omega}^{n}(q), y\right)=R_{n}$. \newline Therefore, \[d(x,\mathfrak{0})\geq d(\gamma_{\omega}^{n}(p),\mathfrak{0})-d(\gamma_{\omega}^{n}(p),x)\geq 9R_n \] Similarly, $d(y, o) \geq 9 R_{n}$. But since $x$ and $y$ lie on both sides of $\mathfrak{0}$, this implies that $d(x, y) \geq 18 R_{n}$ (because $\gamma_{0}$ is a geodesic). Now, by triangle inequality, we conclude that

$$
d\left(\gamma_{\omega}^{n}(p), \gamma_{\omega}^{n}(q)\right) \geq d(x, y)-2 R_{n} \geq 16 R_{n} \geq 4 R_{n}
$$

So the lemma follows.
\end{proof}

\subsection*{Proof of Theorem~\ref{thm38}}
We now let $i$ and $j$ be integers such that
\begin{equation} \label{eq1}
d\left(\gamma_{\omega}^{n}(p), \gamma_{0}(i)\right) = d\left(\gamma_{\omega}^{n}(q), \gamma_{0}(j)\right) = R_{n}.
\end{equation}
Lemma~\ref{lemma38} says, $d\left(\gamma_{\omega}^{n}(p), \gamma_{\omega}^{n}(q)\right) \geq 4R_{n}$. Hence by Lemma~\ref{lemma36}, 
\[
\left|\gamma_{\omega}^{n}([p, q])\right| \geq \phi\left(R_{n}\right)\left|\gamma_{0}([i, j])\right|.
\]
Note that, the path \(\gamma^n\) satisfies the condition \(d(\mathfrak{0}, \gamma^n) \leq |\gamma^n|\) in \(X\). Hence, applying Proposition ~\ref{prop2.13}, we get:
\[
\begin{aligned}
\left|\gamma_{\omega}^{n}([p, q])\right|_{\omega}
& \geq c \left|\gamma_{\omega}^{n}([p, q])\right| - r_{1} \\
& \geq c\, \phi\left(R_{n}\right)\left|\gamma_{0}([i, j])\right| - r_{1} \\
& = c\, \phi\left(R_{n}\right)(j - i) - r_{1}.
\end{aligned}
\]
On the other hand for the upper bound, since \(\gamma_{\omega}^{n}\) is an \(\omega\)-geodesic between \(x_{n}\) and \(y_{n}\), we have
\[
\begin{aligned}
\left|\gamma_{\omega}^{n}([p, q])\right|_{\omega}
&= d_\omega(\gamma_\omega^n(p), \gamma_\omega^n(q))\\
& \leq 2bR_{n} +2r_0+ \left|\gamma_{0}([i, j])\right|_{\omega} \quad \text{(by Line~\ref{eq1} and Proposition ~\ref{prop2.13})} \\
& \leq 2bR_{n} + b\,\left|\gamma_{0}([i, j])\right| + 3r_{0} \quad \text{(by Proposition ~\ref{prop2.13})} \\
& = 2bR_{n} + b(j - i) + 3r_{0}.
\end{aligned}
\]
By combining both lower and upper bound we have,
\[(c\, \phi\left(R_{n}\right)-b)(j-i)\leq 2bR_{n} + r_{1}+ 3r_{0}\]
Note that by triangular inequality, $j-i=\left|\gamma_{0}([i, j])\right| \geq R_{n}$. Thus,
\[(c\, \phi\left(R_{n}\right)-b)R_n\leq 2bR_{n} + r_{1}+ 3r_{0}\]
which implies, 
\begin{equation}
\phi(R_{n})\leq \frac{3b}{c} +\frac{r_{1}+ 3r_{0}}{c \cdot R_n}
\end{equation}
For $n$ large enough, this yields a contradiction since $R_n\rightarrow \infty$ and $\phi(R_n)\rightarrow \infty.$
Which implies, there exist a finite set $A:=A(\omega)$ such that $\gamma_n^\omega$ intersects $A$ in $X_\omega$ for all $n$. Therefore, in $X_\omega$ the geodesics $\gamma_n^\omega$, intersects $A$.
\newline
 Existence of bi-infinite geodesic follows from the Arzelà–Ascoli theorem since there exists at least one vertex $v\in A$ which is visited by infinitely many $\gamma_n^\omega$'s. Moreover, any ball around $v$ is finite therefore, it has finitely many paths hence by the pigeonhole principle, there is a $\omega$-geodesic which is traversed by infinitely many of the chosen $\omega$- geodesics. By taking bigger radius balls we will get a chain of these nested $\omega$-geodesics which will converge to a bi-infinite $\omega$-geodesic.

 Lastly, since $\gamma_\omega$ also intersects the set $A$, we conclude that the set distance between $\go$ and
 $\gamma_\omega$ is bounded above by the diameter of $A$.
\end{proof}

\bibliographystyle{alpha}

\end{document}